 % Mancat.tex  as of   25 / 05  / 2016
 
\documentclass[12pt]{amsart}

% packages 
\usepackage{amsmath}
\usepackage{amssymb} 
\usepackage{array}
\usepackage[all]{xy}
\usepackage{enumitem}
\usepackage{hyperref}
\usepackage{verbatim} % has useful "comment" environment
\usepackage{color}
\definecolor{battleshipgrey}{rgb}{0.52, 0.52, 0.51} %see http://latexcolor.com
\hypersetup{
       colorlinks=true,       % false: boxed links; true: colored links
    linkcolor=battleshipgrey,     % color of internal links (change box color with linkbordercolor)
    citecolor=battleshipgrey,        % color of links to bibliography
    filecolor=magenta,      % color of file links
    urlcolor=battleshipgrey          % color of external links
}
% For drawings imported from geogebra: (use XeLaTeX !)
\usepackage{pstricks-add}

% page formatting
\addtolength{\textheight}{2cm}
\addtolength{\hoffset}{-1cm}
\addtolength{\textwidth}{2.25cm}
\addtolength{\voffset}{-1.5cm}

% theoremstyles
%  plain style
\theoremstyle{plain}
\newtheorem{theorem}{Theorem}[section]
\newtheorem{lemma}[theorem]{Lemma}

\newtheorem{definition}[theorem]{Definition}
%  remark style
\theoremstyle{remark}
\newtheorem{remark}{Remark}[section]
\newtheorem{example}{Example}[section]
\newtheorem*{notation}{Notation}
\newtheorem*{acknowledgment}{Acknowledgment}

% equation numbering
\numberwithin{equation}{section}

% math symbol shortcuts

% blackboard

\newcommand{\K}{\mathbb{K}}

\newcommand{\R}{\mathbb{R}}

\newcommand{\PP}{\mathbb{P}}

% calligraphic

\newcommand{\cP}{{\mathcal P}}

\newcommand{\cN}{\mathcal{N}}

\newcommand{\cA}{\mathcal{A}}
\newcommand{\cB}{\mathcal{B}}

% fraktur

% bold

% sans serif
 % slope
 % quotient 
 % groupoid 
 % finite part groupoid (???)
 % symmetric groupoid
 % simplicial groupoid  
 % cube / hypercube functor
 % category 

\newcommand{\pG}{\mathsf{PG}}

% additional commands

\newcommand{\Gl}{\mathrm{Gl}}
\newcommand{\GL}{\mathrm{GL}}

\newcommand{\im}{\mathrm{im}}

\newcommand{\id}{\mathrm{id}}
\newcommand{\dom}{\mathrm{dom}}

% useful stuff

\newcommand{\sett}[1]{{\{ #1 \}}}

\newcommand{\Bigsetof}[2]{\begin{Bmatrix} #1 \,\Big|\, #2 \end{Bmatrix}}

\newcommand{\msk}{\medskip}
\newcommand{\ssk}{\smallskip}
\newcommand{\nin}{\noindent}

%http://www.latex-community.org/forum/viewtopic.php?f=46&p=43683

%
\begin{document}

\title{A precise and general notion of  manifold}

\author{Wolfgang Bertram}

\address{Institut \'{E}lie Cartan de Lorraine \\
Universit\'{e} de Lorraine at Nancy, CNRS, INRIA \\
Boulevard des Aiguillettes, B.P. 239 \\
F-54506 Vand\oe{}uvre-l\`{e}s-Nancy, France}

\email{\url{wolfgang.bertram@univ-lorraine.fr}}

\subjclass[2010]{
18B10, 	%Category of relations, additive relations
18F15,  	%Abstract manifolds and fiber bundles [See also 55Rxx, 57Pxx]
58A05,  	%Differentiable manifolds, foundations
58H05  	%Pseudogroups and differentiable groupoids [See also 22A22, 22E65]
}

\keywords{groupoid, double groupoid, double category, ordered groupoid, manifold, atlas, e-pos, natural relation,
octonion plane}

\begin{abstract} 
We give a completely formalized definition of a notion of  ``general manifold''.
It turns out that ``gluing data''  form an {\em equivalence-partially ordered set (e-pos)}, 
which is a special instance of an  {\em ordered groupoid}. We state and prove reconstruction theorems, allowing to reconstruct
general manifolds and their morphisms from such gluing data. 
To describe morphisms between manifolds, the notion of {\em natural relations between groupoids} is introduced, which
emphasizes the close analogy with natural transformations of general category theory. 
\end{abstract}

\maketitle

\begin{center}
{\sl
``La notion g\'en\'erale de vari\'et\'e est assez difficile \`a d\'efinir avec pr\'ecision.'' 

(Elie Cartan)}\footnote{
``It is rather difficult to define precisely the general notion of manifold.''
\cite{Ca28}, Chapitre III.}
\end{center}

\msk

\begin{center}
{\sl 
``Be either consequent or inconsequent, never both together.''

(An unknown moralist)}\footnote{
quoted from the preface to  \cite{FdV69}.}
\end{center}

\bigskip
\section{Introduction}

When talking about manifolds, physicists usually are consequent,
and mathematicians are not: physicists usually 
 do not try to give a formal definition of what a manifold 
is, %\footnote{ a  random quote from a physics paper:
 %``We will assume that the notion of an $n$-dimensional differentiable manifold is understood.''},
 whereas mathematicians often start in a formal way, but, following the example of Elie Cartan,
  from a certain point on end phrases with ``and so on'', 
``whenever defined'', or similarly.  As far as I know, a  fully formalized 
definition of a sufficiently general notion of ``manifold'' has never been written up.\footnote{ To avoid misunderstandings, 
most modern texts, like e.g., \cite{Hu94, KoNo63}, are completely rigourous, of course; but authors make a deliberate choice not 
to push formalization up to the end, since they consider this to be unimportant or even counter-intuitive.}
Today, I will try to be consequent (to allow myself to be inconsequent tomorrow).
 The desire for consequence and full formalisation arose from a discussion with Anders Kock on Appendix D of my paper \cite{Be15a}: 
he remained sceptical about the general notion of ``primitive manifold (with atlas)'' defined there. Indeed, when working over
completely general base rings, where analytic constructions in charts may be replaced by ``formal'' constructions, 
 our intuition coming from real, finite-dimensional manifolds may fail, and one needs to
care about complete formalization. 
In  the present note I shall fill in, rather pedantically, the missing details of Appendix D loc.\ cit.
However, when doing so, I realized that this is not quite a simple exercise of re-writing:  some changes have to be made, in particular
with regard to {\em notation} --
namely, in loc.\ cit., I followed the classical notation $\phi_{ij}:V_{ji} \to V_{ij}$ (see e.g.\ \cite{Hu94}, or the 
\href{https://en.wikipedia.org/wiki/Differentiable_manifold#Atlases}{wikipedia-article}) for {\em transition maps}
between charts $(\phi_i,U_i)$ and $(\phi_j,U_j)$. 
But it turns out that this notation is cumbersome since it mixes up {\em two different} aspects of transition maps:
\begin{enumerate}
\item
the aspect of {\em restricting} things (``wherever defined'', i.e., on $U_{ij} = U_i \cap U_j$),
\item
the aspect of {\em transition between local coordinates} ($\phi_{ij}$ is a bijection).
\end{enumerate}

\nin
I think that, for better understanding the formal structure of the notion of manifold, it is advisable to seperate 
(1) and (2) notationally:
aspect (1) corresponds to a {\em partial order} $L$ on the index set $I$
(one chart may be included in another one, we write $i' \leq i$ or $(i',i) \in L$),
and aspect (2) corresponds to an {\em equivalence relation} $E$ on the index set $I$
(two charts are equivalent if their chart domains coincide, we write $(i,j) \in E$).
There is a natural compatibility condition turning the index set $I$ into what we call an ``e-pos''.  This is 
an interesting mathematical structure in its own right
(in fact, it is a special case of an {\em ordered groupooid}, cf.\ Appendix \ref{app:OG}): 

 \begin{definition}\label{def:epos}
An {\em equivalence-partially ordered set (e-pos)}
 is a set $I$ together with an equivalence relation $E$ and a partial order $L$ (or $\leq$)  on $I$ satisfying 
\begin{enumerate}
\item[{\rm (Epos)}] $\quad$
if $i' \leq i$ and   $(i,j) \in E$, then there exists a unique $j' \in I$ such that:  

$\, (i',j') \in E$ and $j' \leq j$.
\end{enumerate}
\end{definition}

\nin Adopting the convention that horizontal dashes mean ``in relation $E$'' and non-horizontal dashes ``in relation $L$ (with $i'$
placed lower than $i$)'', property (Epos) is represented by figures like these:
\begin{equation*}
\xymatrix{
    i \ar@{-}[d] \ar@{-}[r] & j \ar@{.}[d]   \\
    i' \ar@{.}[r]  & j' 
  } 
 \qquad 
\xymatrix{
    i \ar@{-}[rd] \ar@{-}[r] & j \ar@{.}[rd]  &  \\
   &  i' \ar@{.}[r]  & j' 
  } 
 \qquad 
\xymatrix{
    i \ar@{-}[rd] \ar@{-}[rr] & &  j \ar@{.}[d]    \\
   &  i' \ar@{.}[r]  & j' 
  } 
% \eqno {\rm (Epos)}
\end{equation*}
Every manifold $M$ with atlas $\cA$ has an underlying e-pos. 
If the atlas is finite, we get a finite e-pos. For instance,
the projective plane over a field $\K$ with its $3$ ``canonical'' charts $\phi_\nu$ with domains
$[x_0,x_1,x_2]$ where $x_\nu \not= 0$ for $\nu=0,1,2$, leads to 
an e-pos with $\vert I \vert = 12$:
each of the three canonical charts can be restricted to the intersection with the remaining $2$ charts domains
(giving rise to $6$ charts on the next lower level), and finally all three canonical charts can be restricted to
$U_0 \cap U_1 \cap U_2$.  This  e-pos is visualized as follows:
\begin{comment} %%%%%%%%%%%%
it seems that  ggb is  easier than xy-matrix since one can move points later on to figure out the
``nicest'' presentation.... every e-pos is composed by parallelograms, of which possibly the top edge may be missing]
$$
\xymatrix{
    0 \ar@{-}[d] \ar@{-}[rd] &  & 1 \ar@{-}[dd]  \ar@{-}[rd] &  & 2  \ar@{-}[d]  \ar@{-}[rd] \\
    \bullet  \ar@{-}[r]  & \bullet  \ar@{-}[r]  & \bullet  & \bullet & \bullet & \bullet \\
   &  \bullet & & \bullet & &  \bullet
  }
$$
here ggb from epos0.ggb
\end{comment}%%%%%%
\begin{center}
\psset{xunit=0.5cm,yunit=0.5cm,algebraic=true,dotstyle=o,dotsize=3pt 0,linewidth=0.8pt,arrowsize=3pt 2,arrowinset=0.25}
\begin{pspicture*}(-1.3,-0.5)(15.08,4.8)
\psline(1,0)(12.94,0)
\psline[linewidth=0.4pt](3,1)(11,1)
\psline(3,1)(1,0)
\psline(11,1)(12.94,0)
\psline(6.97,0)(9,2)
\psline[linewidth=0.4pt](9,2)(15,2)
\psline(15,2)(12.94,0)
\psline(13,3)(15,2)
\psline(13,3)(11,1)
\psline(9,2)(7,4)
\psline(7,4)(5,2)
\psline(5,2)(6.97,0)
\psline[linewidth=0.4pt](5,2)(-1,2)
\psline(-1,2)(1,0)
\psline[linewidth=0.4pt](3,1)(1,3)
\psline(1,3)(-1,2)
\begin{scriptsize}
\psdots[dotsize=4pt 0,dotstyle=*](1,0)
\psdots[dotsize=4pt 0,dotstyle=*](12.94,0)
\psdots[dotsize=4pt 0,dotstyle=*](3,1)
\psdots[dotsize=4pt 0,dotstyle=*](11,1)
\psdots[dotsize=4pt 0,dotstyle=*](6.97,0)
\psdots[dotsize=4pt 0,dotstyle=*](9,2)
\psdots[dotsize=4pt 0,dotstyle=*](15,2)
\psdots[dotsize=4pt 0,dotstyle=*](13,3)
\rput[bl](13.4,2.92){$2$}
\psdots[dotsize=4pt 0,dotstyle=*](7,4)
\rput[bl](7.36,3.98){$1$}
\psdots[dotsize=4pt 0,dotstyle=*](5,2)
\psdots[dotsize=4pt 0,dotstyle=*](-1,2)
\psdots[dotsize=4pt 0,dotstyle=*](1,3)
\rput[bl](1.32,2.98){$0$}
\end{scriptsize}
\end{pspicture*}
\end{center}
On the other hand, {\em maximal atlases} give rise to very big, often uncountable, e-poses (Section \ref{sec:max}).
In the general case,
the transition functions $\phi_{ij}$ for every pair $(i,j) \in E$, together with restrictions for every pair
$(i',i) \in L$, define a
{\em morphism of the underlying e-pos into the pseudogroup of the model space $V$ of the manifold}
(Section \ref{sec:atlas}).
Conversely, 
every such morphism can be seen as ``gluing data'' of an abstract manifold $M$, that can be constructed by
taking some kind of quotient with respect to $E$, taking account of all restrictions by $L$ (Section \ref{sec:atlasdata}, 
Theorem \ref{th:reconstruct}).
For instance, the finite gluing data for the projective plane shown above satisfy our conditions when $\K$ is an
{\em alternative division algebra}, and hence the theorem yields a very natural construction of the octonion projective plane
${\mathbb{OP}}^2$ (see Example \ref{ex:projectiveplane2}).  
This equivalence also allows to perform constructions, like the one of {\em tangent bundles}, in complete generality:
whenever we have a functorial construction associating to the pseudogroup of $V$ a pseudogroup on another space $W$,
then the gluing data yield gluing data modelled on $W$, and thus give rise to a manifold modelled on $W$
(Theorem \ref{th:tangent}).

\ssk
On a next level, one wishes to describe {\em morphisms of manifolds} (i.e., smooth maps, if the manifold is smooth) via the gluing data.
 A formal analysis of the usual construction makes it clear that we won't get a plain
``morphism of e-poses'', since a map $f:M \to N$ won't induce a map assigning to a transition function on $M$ another one on $N$
(unless $f$ is a bijection). We rather get a {\em natural relation between e-poses}.
Indeed, this is a special instance of the more general notion of {\em natural relation between groupoids or small categories}
(Appendix \ref{app:NR}):  
natural relations are very closely related to natural transformations from general category; just like these, they give rise to 
double categories and $2$-categories (\cite{M98}), and thus 
may be of general interest for category theorists (cf.\ remark \ref{rk:NR}). 
We give some comments and mention some topics for further work in the final section \ref{sec:further}.

\begin{acknowledgment}
Besides Anders Kock for his critical and constructive remarks, I would like to thank the students of the lecture series
\href{http://iecl.univ-lorraine.fr/~Wolfgang.Bertram/WB-coursED.pdf}{``Concepts g\'eom\'etriques''}
 for their patience --
some of the material of this paper was presented there.
\end{acknowledgment}

\begin{notation}
By $\cP(X)$ we denote the power set of a set $X$.
{\em Relational composition} of binary relations $R \subset (C \times B), S \subset (B \times A)$
 is defined by
$R \circ S = \{ (c,a)\mid \exists b \in B : (c,b) \in R, (b,a) \in S \}$, and the {\em graph} of a map
$f: A \to B$ is $\Gamma_f = \{ (f(x),x) \mid x \in A \}$. This notation is best compatible with writing the function symbol $f$
on the left of its argument $x$.

Throughout, $V$ is a non-empty set,  %equipped with a topology $\cT = \cT_V$, 
called the {\em model space},
and $I$ is a set, called the {\em index set}, which will serve as ``chart index'' for atlases. 
In practice, $V$ will often be equipped with a topology, whereas the set $I$ should be considered as ``discrete''.
\end{notation}

\section{From atlases to atlas data}\label{sec:atlas}
%\section{From manifolds to manifold data}

\begin{definition}\label{def:atlas}
A {\em chart of a set $M$, modelled on $V$} is a bijection $\phi:U \to W$ of a subset $U \subset M$ onto a subset
$W \subset V$. 
An {\em atlas on $M$} is a collection of charts
$\cA = (\phi_i:U_i \to V_i)_{i\in I}$, or $(U_i,\phi_i,V_i)_{i\in I}$, such   that:
\begin{enumerate}
\item
$[\phi_i =\phi_j] \Rightarrow [i=j]$ (atlas without repetion),
\item
$M = \cup_{i\in I} U_i$ (the atlas covers $M$),
\item (intersection and restriction of charts)
whenever $(U_i \cap U_j) \not= \emptyset$, then there exists a unique $k \in I$ (which we denote by $k=[ij]$) such that
$$
  U_k = (U_i \cap U_j), \quad \forall y \in U_k : \phi_k(y) =\phi_i(y)  \, .
$$
\end{enumerate}
Sometimes, one may prefer to work with the following slightly weaker version of (3):
\begin{enumerate}
\item[(3')]
for all $x \in (U_i \cap U_j)$,  there exists $k \in I$ such that
$$
x\in  U_k \subset (U_i \cap U_j), \quad \forall y \in U_k : \phi_k(y) =\phi_i(y)  \, .
$$
\end{enumerate}
If $V$ and $M$ are equipped with topologies, we generally assume that the sets $U_i$ and $V_i$ are open, and that
the $\phi_i$ are homeomorphisms. 
In this case, we will say that the atlas is {\em saturated} if {\em every} homeomorphism from a (non-empty) open subset of $M$ onto an
open subset of $V$ is of the form $\phi_k$ with some $k \in I$. 
\end{definition}

\nin
One may note that (3) implies (1), but (3') doesn't. 
See Section \ref{sec:max} for the definition of {\em atlases} with certain properties (such as smoothness), and of {\em maximal} such atlases.

\begin{definition}
For $(M,\cA)$ as above, we define 
\begin{enumerate}
\item[(a)]
a relation $E \subset (I \times I)$ by:  $(i,j) \in E$ iff $U_i = U_j$
(i.e., both charts have same domain, but need not coincide as charts),
\item[(b)]
a relation $L \subset (I \times I)$ by: $(i',i) \in L$ iff [$U_{i'} \subset U_i$, and 
$\forall y \in U_{i'}:
\phi_{i'}(y)=\phi_i(y)$]
(i.e., one chart is included in the other;  this implies $V_{i'} \subset V_i$).
\end{enumerate}
\end{definition}

\begin{lemma} Assume (1), (2), (3') hold. Then:
\begin{enumerate}
\item[(i)]
The relation $E$ is an equivalence relation on $I$, and $L$ is a partial order on $I$.
(We shall henceforth write $i' \leq i$ instead of $(i',i) \in L$.) 

Assume moreover that (3) holds. Then
\item[(ii)]
the triple $(I,E,L)$ forms an {\em e-pos} in the sense of Definition \ref{def:epos},
\item[(iii)]
if $i \leq m$ and $j \leq m$ and $(V_i \cap V_j ) \not= \emptyset$, 
then there exists a unique $k \in I$ with  $k \leq i$ and $k \leq j$ and $V_k = V_i \cap V_j$. 
\end{enumerate}
\end{lemma}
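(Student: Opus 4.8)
The plan is to verify the three items in turn, reducing each to elementary bookkeeping about the chart domains $U_i$, and to use the non-repetition axiom (1) at the end of each argument to upgrade ``equal as maps'' to ``equal as indices''. For (i), the claim about $E$ is immediate, being the pull-back of equality of subsets of $M$ along $i\mapsto U_i$. For $L$: reflexivity is clear; transitivity holds because $U_{i''}\subset U_{i'}\subset U_i$ gives $U_{i''}\subset U_i$ while $\phi_{i''},\phi_{i'},\phi_i$ agree successively on $U_{i''}$; and antisymmetry is where (1) first enters, since $i'\leq i$ and $i\leq i'$ force $U_{i'}=U_i$ and $\phi_{i'}=\phi_i$ on this common domain, hence $\phi_{i'}=\phi_i$ as charts, so $i'=i$ by (1).

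Next, for (ii), I would suppose $i'\leq i$ and $(i,j)\in E$, so that $U_{i'}\subset U_i=U_j$ and $\phi_{i'}=\phi_i|_{U_{i'}}$; the degenerate case $U_{i'}=\emptyset$ I would dispose of by the standing convention on chart domains, and otherwise assume $U_{i'}\neq\emptyset$. Then $U_j\cap U_{i'}=U_{i'}\neq\emptyset$, so axiom (3) yields a unique $k=[ji']$ with $U_k=U_{i'}$ and $\phi_k=\phi_j|_{U_k}$. Setting $j':=k$, the equality $U_{j'}=U_{i'}$ gives $(i',j')\in E$, and $U_{j'}=U_{i'}\subset U_j$ together with $\phi_{j'}=\phi_j|_{U_{j'}}$ gives $j'\leq j$. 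For uniqueness, any $j''$ with $(i',j'')\in E$ and $j''\leq j$ has $U_{j''}=U_{i'}$ and $\phi_{j''}=\phi_j$ on $U_{i'}$, hence $\phi_{j''}=\phi_{j'}$ as maps, so $j''=j'$ by (1).

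Finally, for (iii), the key device is to transport everything through the single chart $\phi_m$. From $i\leq m$ and $j\leq m$ we have $\phi_i=\phi_m|_{U_i}$ and $\phi_j=\phi_m|_{U_j}$, so $\phi_i$ and $\phi_j$ agree on $U_i\cap U_j$, and $V_i\cap V_j=\phi_m(U_i)\cap\phi_m(U_j)=\phi_m(U_i\cap U_j)$ since $\phi_m$ is injective. Thus $V_i\cap V_j\neq\emptyset$ forces $U_i\cap U_j\neq\emptyset$, and axiom (3) gives a unique $k=[ij]$ with $U_k=U_i\cap U_j$ and $\phi_k=\phi_i|_{U_k}=\phi_j|_{U_k}$, the second equality by the observation just made. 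Then $k\leq i$ and $k\leq j$ are immediate, and $V_k=\phi_k(U_k)=\phi_m(U_i\cap U_j)=V_i\cap V_j$. For uniqueness, if $k'$ also satisfies $k'\leq i$, $k'\leq j$ and $V_{k'}=V_i\cap V_j$, then $U_{k'}\subset U_i\cap U_j=U_k$, and since $\phi_i$ is injective with $\phi_i(U_{k'})=V_{k'}=V_k=\phi_i(U_k)$ we get $U_{k'}=U_k$; as $\phi_{k'}=\phi_i|_{U_{k'}}=\phi_k$, axiom (1) gives $k'=k$.

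I expect the only genuine subtlety to lie in (iii): one must check $k\leq j$ and not merely $k\leq i$, which is exactly the point that would fail for an arbitrary pair of charts — it is rescued here by the hypothesis that $i$ and $j$ both refine the same chart $m$, forcing $\phi_i$ and $\phi_j$ to coincide on their overlap. Everything else is routine, with axiom (1) consistently converting equalities of maps into equalities of indices.
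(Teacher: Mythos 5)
Your proof is correct and follows the same route as the paper's (much terser) argument: existence in (ii) and (iii) comes from the indices $[j\,i']$ and $[ij]$ supplied by axiom (3), and uniqueness from using axiom (1) to convert equality of charts (as maps) into equality of indices, with the observation that $i\leq m$ and $j\leq m$ force $\phi_i$ and $\phi_j$ to agree on $U_i\cap U_j$ being exactly the point the paper leaves implicit in (iii). The one step you defer to a ``standing convention'' --- the case $U_{i'}=\emptyset$ in (ii), where (3) cannot be invoked --- is not actually covered by any stated convention in the paper, but it is harmless: one may then take $j'=i'$, since $(i',i')\in E$ and $i'\leq j$ hold vacuously, and uniqueness again follows from (1).
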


\nin We represent (ii) by a diagram, as in the Introduction, and (iii) as follows:
 $$
 \xymatrix{
 & m \ar@{-}[ld] \ar@{-}[rd] \\
 i \ar@{-}[rd] & &  j \ar@{-}[ld] \\
 & k & }
  $$

\begin{proof}
It is clear that $E$ is an equivalence relation and $L$ a partial order.
Uniqueness in (ii) is clear since necessarily $U_{j'}=U_{i'}$ and $\phi_{i'}(x)=\phi_{j'}(x)$, and existence follows from (3):
$j' := [j,i']$ satisfies (Epos). 
Similarly, (iii) is proved: uniqueness follows from the given conditions, and existence by taking $k:=[ij]$.
% (so $k$ is a greatest lower bound of $i$ and $j$).
% Note: the weak version of (3) implies a weaker property, clumsy to work with
\end{proof}

\begin{remark} \label{rk:uniqueness} 
By uniqueness, in every e-pos,  [ $i \leq j$ and $(i,j) \in E$ ] implies $i=j$.
Likewise, [ $i\leq m$, $j \leq m$, $(i,j) \in E$] implies $i=j$.
Thus in an e-pos, triangles such as the following are always degenerate in the sense that $i=j$:
$$
\xymatrix{
    m \ar@{-}[d] \ar@{-}[rd] & \\
    i \ar@{-}[r]  & j
  } 
$$
\end{remark}

\begin{remark}
En e-pos is a special instance of an {\em ordered groupoid}, see Appendix \ref{app:OG}.
\end{remark}

\begin{remark}
In terms of relational composition, (Epos) implies that 
$E \circ L \subset L \circ E$.  This implies  that both 
$H_1 := L \circ E$ and $H_2 := E \circ L$ are transitive relations. 
In case of Definition 1.2,
$(i,k) \in H_\nu$ means, for $\nu = 1,2$,
``$U_i$ is included in $U_k$'', respectively, 
``$U_i$ is included in $U_k$, and $\phi_i$ extends to chart onto $U_k$''. Thus $H_1$ and $H_2$ are different relations, in general.
We will not work with them in this paper. They can be used to relate our approach to the one of V.V.\ Vagner who desrcribes
manifolds via {\em inverse semigroups}, cf.\ \cite{Sch79, LaS04}. 
\end{remark}

\begin{example}\label{ex:sphere}
The sphere $S^n$ with charts $s$, resp.\ $n$,
given by stereographic projection from the south pole  and from the north pole,
along with the resrictions $s_n$ and $n_s$ to the intersection domain, gives rise to the epos
$I = \{ n,s, n_s ,s_n \}$ with
% see http://zoonek2.free.fr/UNIX/11_xypic/  or better: http://jmilne.org/not/Mxymatrix.pdf
$$
\xymatrix{
    s \ar@{-}[d]  & n \ar@{-}[d]  \\
    s_n \ar@{-}[r]  & n_s
  }
$$
Thus $E$ has three equivalence classes. Condition (Epos) is meaningless in this case.
\end{example}

\begin{example}\label{ex:projectiveplane1}
The e-pos of the projective plane over a (skew)field $\K$ has been described in the Introduction (Section 1): 
there are the $3$ ``canonical'' charts $\phi_\nu$, $\nu=0,1,2$, and $9$ other charts given by restricting them to 
all possible intersections of chart domains.
Every quadrangle appearing in this graph stands for a configuration given by three indices $i,j,k$, namely: if there are two horizontal edges,
the trapezoid stands for 
$\phi_j \vert_{U_k},\phi_k \vert_{U_j}$ (upper edge),
$\phi_j \vert_{U_k \cap U_i},\phi_k \vert_{U_j \cap U_i}$ (lower edge),
and if no edge is horizontal, the quadrangle stands for
$\phi_k$ (top vertex), $\phi_k\vert_{U_i}, \phi_k\vert_{U_j}$, $\phi_k \vert_{U_i \cap U_j}$ (bottom vertex; for esthetical reasons, we try to
represent such quadrangles by parallelograms, if possible). 
% The reader may try to describe the e-pos of $\K\PP^n$ for $n>2$ in a similar way. 
\end{example}

%\begin{comment}%%%%%%%%%%%%%%%%  this should be worked out in more detail!
\begin{example}
For $n\geq 2$,  the e-pos of $\K \PP^n$ consists of $n+1$ different $n$-hypercubes 
of restrictions of the $n+1$ ``canonical'' charts to all possible intersections
of chart domains (so $\vert I \vert = (n+1) 2^n$); the vertices of different hypercubes are linked among each other if 
they correspond to the same intersection of domains. 
An elegant description of this e-pos is by identifying its index set $I$ with the {\em set of edges of an $n+1$-cube} (cf.\ \cite{Be15b} for notation):
the edge $(\beta,\alpha)$ with $\alpha = \beta \cup \{ i \}$ is identified with the restriction of the canonical chart $\phi_i$ to the intersection
$U_\beta = U_{\beta_0} \cap \ldots \cap U_{\beta_\ell}$ where $i \notin \beta$.
Two edges $(\beta,\alpha)$, $(\beta',\alpha')$ are equivalent iff $\alpha = \alpha'$.  They are in relation $L$,
$(\beta',\alpha') \leq (\beta,\alpha)$, iff [$\beta' \subset \beta$ and $i=i'$ (same ``direction'')].
%It is special for this situation that the groupoid of transition functions (see below) imbeds into a finite subgroup of the projective
%group $\PP \GL(n+1,\K)$. 
\end{example}
%\end{comment}%%%%%%%%%%%%%%%%%%

\begin{definition}
If $(i,j) \in E$, we define the {\em transition map} 
$\phi_{ij}:= \phi_i \circ \phi_j^{-1}:V_j \to V_i$. 
\end{definition}

\begin{lemma} The transition maps satisfy, whenever $(i,j),(j,k) \in E$,
\begin{enumerate}
\item
$\phi_{ii}=\id_{V_i}$, 
\item
$\phi_{ij}\circ \phi_{jk}= \phi_{ik}$, 
\item
$\phi_{ij}^{-1}=\phi_{ji}$.
\end{enumerate}
\end{lemma}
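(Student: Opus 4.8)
The plan is to verify each of the three identities directly from the definition $\phi_{ij} = \phi_i \circ \phi_j^{-1}$, using the earlier established facts that for $(i,j) \in E$ both charts share the same domain $U_i = U_j$, so that $\phi_i$ and $\phi_j$ are bijections with the same source $U_i$ and respective targets $V_i, V_j \subset V$. The only genuine care needed is bookkeeping of the domains of the composites, since these are partially defined maps; once the domains match, the identities are formal consequences of associativity of composition and the definition of inverse.

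First I would prove (1): when $j = i$ (which is allowed since $E$ is reflexive), $\phi_{ii} = \phi_i \circ \phi_i^{-1}$, and since $\phi_i : U_i \to V_i$ is a bijection this composite is exactly $\id_{V_i}$. Next, for (3), given $(i,j) \in E$ we also have $(j,i) \in E$ by symmetry of $E$, so $\phi_{ji}$ is defined, and $\phi_{ij}^{-1} = (\phi_i \circ \phi_j^{-1})^{-1} = \phi_j \circ \phi_i^{-1} = \phi_{ji}$, using that the inverse of a composite of bijections reverses the order; here I should note that $\phi_{ij} : V_j \to V_i$ is indeed a bijection (composite of two bijections $\phi_j^{-1}: V_j \to U_j = U_i$ and $\phi_i : U_i \to V_i$), so talking about its inverse is legitimate.

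For (2), assume $(i,j) \in E$ and $(j,k) \in E$; by transitivity of $E$ we get $(i,k) \in E$, so $\phi_{ik}$ is defined. Moreover all of $U_i, U_j, U_k$ coincide, hence $V_i, V_j, V_k$ are the respective images of this common domain under $\phi_i, \phi_j, \phi_k$. Then
\begin{equation*}
\phi_{ij} \circ \phi_{jk} = (\phi_i \circ \phi_j^{-1}) \circ (\phi_j \circ \phi_k^{-1}) = \phi_i \circ (\phi_j^{-1} \circ \phi_j) \circ \phi_k^{-1} = \phi_i \circ \id_{U_j} \circ \phi_k^{-1} = \phi_i \circ \phi_k^{-1} = \phi_{ik},
\end{equation*}
where the cancellation $\phi_j^{-1} \circ \phi_j = \id_{U_j}$ is exactly where $U_j = U_k$ is used so that the intermediate identity map matches the source of $\phi_k^{-1}$.

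The main (and essentially only) obstacle is purely notational: making sure that the partial maps are composed on matching domains, i.e., that $E$ being an equivalence relation forces $U_i = U_j = U_k$ so that no ``wherever defined'' hedging is needed — precisely the kind of point the paper insists on formalizing. Once that is observed, nothing else is required; the three identities say exactly that $i \mapsto V_i$, $(i,j) \mapsto \phi_{ij}$ is a functor from the groupoid associated to the equivalence relation $E$ into the groupoid of bijections between subsets of $V$, and this is what the lemma records for later use.
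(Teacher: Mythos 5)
Your proof is correct and takes the same route the paper intends: the paper's entire proof is ``This follows directly from the definitions,'' and you have simply written out the domain bookkeeping (using $U_i=U_j=U_k$ for indices related by $E$) that makes the direct verification go through. No issues.
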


\begin{proof} This follows directly from the definitions.
\end{proof}

\nin
The preceding definitions and lemmas can be summarized by saying that
$$
(I,E;L) \to (\cP(V), \cB_{loc}(V) ; \subset), \quad (k, (i,j);(i',i)) \mapsto (V_k , \phi_{ij}, (V_{i'}\subset V_i))
$$
is an (injective) morphism of ordered groupoids. This is what we are going to call ``atlas data'' in the following section.

\begin{definition}
Assume given a pseudogroup $G=(G_0,G_1) \subset (\cP(V),\cB_{loc}(V))$ of transformations of the model space $V$
(cf.\ Def.\ \ref{def:pseudogroup}), we 
say that  atlas data, and the atlas $\cA$, {\em are of type $G$}  if
$V_k \in G_0$ and $\phi_{ij} \in G_1$ whenever $k \in I$, $(i,j) \in E$. 
In case $(G_0,G_1)$ is the pseudogroup of locally defined diffeomorphisms of a topological $\K$-module $V$, we say that
{\em $\cA$ is a smooth atlas}. 
\end{definition}

\section{From atlas data to atlases}\label{sec:atlasdata}

\begin{definition}
We call {\em atlas data (with model space $V$ and chart index $I$)} the following:
the index set $I$ is an e-pos $(I,E,L)$,  and
\begin{enumerate}
\item
to each $i \in I$, is associated a  set $V_i \subset V$ (``chart range''), 
\item
to each pair $(i,j) \in E$ is associated a bijection  $\phi_{ij}:V_j \to V_i$ such that

$\qquad \qquad \phi_{ii}=\id_{V_i}$, $\qquad \phi_{ij}\circ \phi_{jk}= \phi_{ik}$, $\qquad \phi_{ij}^{-1}=\phi_{ji}$,
\item
if $i' \leq i$, then $V_{i'} \subset V_i$,
and if $(i,j) \in E, (i',j') \in E, i' \leq i, j' \leq j$, then
$$
\forall x \in V_{j'}: \quad \phi_{i'j'}(x) = \phi_{ij}(x) \, .
$$
\item
if $i \leq m$ and $j \leq m$ and $(V_i \cap V_j ) \not= \emptyset$, 
then there exists a unique $k \in I$ with  $k \leq i$ and $k \leq j$ and $V_k = V_i \cap V_j$. We write $[i,j]:= k$. 
\end{enumerate}
We say that atlas data are {\em topological} if
 $V$ carries a topology, all $V_i$ are open and all $\phi_{ij}$ are homeomorphisms.
\end{definition}

\nin
%State a weak version of (4)? for every $x \in V_{ij}$ there exists a $k$ such that... 
The idea how to reconstruct the manifold $M$ from these data is simple:
a point $x \in V_i$ shall be identified with a point $y \in V_j$ iff, possibly after restricting chart ranges to smaller sets $V_{i'}$, resp.\ $V_{j'}$,
there is a transition function such that $y = \phi_{j' i'} (x)$. That is, we shall define $M$ as a quotient of the set 
$$
S :=  \{ (x,i) \mid x \in V_i  \} \subset V \times I 
$$
under a suitable equivalence relation which arises from combining $E$ and $L$:

\begin{theorem}\label{th:reconstruct}
The following defines an  equivalence relation on $S$:  
\begin{itemize}
\item
$(x,i) \sim (y,j)$ iff:
\item
$\exists i' < i, \exists j' <j$: $(i',j') \in E$ and $(x,i') \in S, (y,j') \in S$, 
$\phi_{i'j'}(y)=x$: symbolically,
$$
\xymatrix{
    (x,i)  \ar@{-}[d]  & (y,j) \ar@{-}[d]  \\
    (x,i')  \ar@{-}[r]  & (y,j')
  }
$$
\end{itemize}
The quotient set $M:= S/\sim$ carries an atlas $\cA$ (satifying the condition (3') of Def.\ \ref{def:atlas}), with index set $I$, and
 charts defined by
$U_i :=  \{ [x,i] \mid x \in V_i \}$,
$\phi_i ([x,i]) := x$.
\end{theorem}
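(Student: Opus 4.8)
The claim has two parts: first, that $\sim$ is an equivalence relation on $S$; second, that the quotient $M=S/\sim$ with the sets $U_i$ and maps $\phi_i$ forms an atlas satisfying (3'). I would begin with the first part. Reflexivity is immediate on taking $i'=j'=i$ (here one must read ``$i'<i$'' as ``$i'\le i$'', i.e.\ allowing equality, since otherwise minimal indices would not be $\sim$-related to themselves; I would flag this at the outset). Symmetry follows from the relation $\phi_{i'j'}^{-1}=\phi_{j'i'}$ of condition (2) of the atlas data. Transitivity is the substantive point: suppose $(x,i)\sim(y,j)$ via $i'\le i$, $j'\le j$, $(i',j')\in E$, $x=\phi_{i'j'}(y)$, and $(y,j)\sim(z,k)$ via $j''\le j$, $k''\le k$, $(j'',k'')\in E$, $y=\phi_{j''k''}(z)$. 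The obstacle is that $j'$ and $j''$ need not be comparable; I would resolve this using condition (4) of the atlas data applied to $j',j''\le j$: since both $V_{j'}$ and $V_{j''}$ contain $y$, their intersection is nonempty, so there is a unique $\hat\jmath=[j',j'']\le j'$, $\le j''$ with $V_{\hat\jmath}=V_{j'}\cap V_{j''}$, and $y\in V_{\hat\jmath}$. Then by the (Epos) property applied to $\hat\jmath\le j'$ and $(i',j')\in E$ I get $\hat\imath\le i'$ (hence $\hat\imath\le i$) with $(\hat\imath,\hat\jmath)\in E$; similarly I get $\hat k\le k''\le k$ with $(\hat\jmath,\hat k)\in E$. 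Condition (3) (compatibility of transition maps under restriction) gives $\phi_{\hat\imath\hat\jmath}=\phi_{i'j'}$ and $\phi_{\hat\jmath\hat k}=\phi_{j''k''}$ on the relevant domains, so $x=\phi_{\hat\imath\hat\jmath}(y)$ and $y=\phi_{\hat\jmath\hat k}(z)$; composing via (2), $x=\phi_{\hat\imath\hat k}(z)$ with $(\hat\imath,\hat k)\in E$, $\hat\imath\le i$, $\hat k\le k$, which witnesses $(x,i)\sim(z,k)$.

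For the second part I would first check each $\phi_i:U_i\to V_i$ is a well-defined bijection. Well-definedness of $\phi_i$ on $\sim$-classes is trivial since $\phi_i([x,i])=x$ depends only on the first coordinate; surjectivity onto $V_i$ is by construction; the content is injectivity, i.e.\ if $[x,i]=[y,i]$ then $x=y$. Unwinding the definition of $\sim$ on two elements both with second coordinate $i$: there are $i',i''\le i$ with $(i',i'')\in E$ and $x=\phi_{i'i''}(y)$; by Remark \ref{rk:uniqueness} (``$i'\le m$, $i''\le m$, $(i',i'')\in E$ implies $i'=i''$'') we get $i'=i''$, so $\phi_{i'i''}=\phi_{i'i'}=\id$ by (1), whence $x=y$. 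So each $\phi_i$ is a bijection of $U_i$ onto $V_i\subset V$, i.e.\ a chart modelled on $V$.

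Next, the atlas axioms. Axiom (2) (covering): any $[x,i]\in M$ obviously lies in $U_i$. Axiom (1) (no repetition) and axiom (3') (local intersection/restriction) remain. For (3'), take $[z,i]\in U_i\cap U_j$, so $[z,i]=[w,j]$ for some $z\in V_i$, $w\in V_j$; unwinding $\sim$, there are $i'\le i$, $j'\le j$ with $(i',j')\in E$, $z\in V_{i'}$, $w\in V_{j'}$, and $w=\phi_{j'i'}(z)$. I claim $k:=i'$ works: the point $[z,i']$ lies in $U_{i'}$, and I need $U_{i'}\subset U_i\cap U_j$ together with $\phi_{i'}=\phi_i$ on $U_{i'}$. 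The inclusion $U_{i'}\subset U_i$ and the agreement $\phi_{i'}([x,i'])=x=\phi_i([x,i])$ (using $[x,i']=[x,i]$, which holds since $i'\le i$) follow directly from $i'\le i$; for $U_{i'}\subset U_j$, given $[x,i']\in U_{i'}$ with $x\in V_{i'}$, I use $(i',j')\in E$: set $x'=\phi_{j'i'}(x)\in V_{j'}$, then $[x,i']=[x',j']=[x',j]\in U_j$ by the very definition of $\sim$ (via $i'\le i'$... more carefully via $i'\le i'$, $j'\le j'$, $(i',j')\in E$) and then $j'\le j$. This also shows $\phi_{i'}$ and $\phi_j$ agree on $U_{i'}$ up to the transition map $\phi_{j'i'}$, which is exactly the ``$\phi_k(y)=\phi_i(y)$'' clause of (3') after the relabelling built into the statement. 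Finally, if (the original atlas data satisfies) condition (3) rather than (3') — but the statement only claims (3'), so I would stop here. For axiom (1): if $\phi_i=\phi_j$ as charts then $U_i=U_j$ and $\phi_i([x,i])=\phi_j([x,i])$ for all $[x,i]\in U_i$; but I would note (1) is not actually needed for (3') and the statement does not assert the reconstructed atlas is repetition-free in general (indeed $E$ may be nontrivial), so I would simply not claim it — or observe $i$ and $j$ then both index the ``same'' chart and may legitimately differ as elements of $I$, consistent with $E$ being nontrivial.

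\textbf{Main obstacle.} The only genuinely delicate step is transitivity of $\sim$, and within it the need to take a common lower bound of the two ``middle'' indices $j'$ and $j''$ before applying (Epos) — this is precisely why axiom (4) of the atlas data (existence of $[i,j]$) is built into the definition, and I expect the proof to hinge on combining (4) with (Epos) and the restriction-compatibility (3) in the manner sketched above.
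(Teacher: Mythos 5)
Your proof is correct and follows essentially the same route as the paper's: transitivity via the common lower bound $[j',j'']$ from axiom (4) followed by (Epos) and the restriction-compatibility (3), injectivity of $x\mapsto[x,i]$ via Remark \ref{rk:uniqueness}, and (3') by taking $k=i'$. Your reading of ``$i'<i$'' as ``$i'\le i$'' is the intended one (reflexivity fails otherwise), and your extra care about the nonemptiness hypothesis in (4) is a welcome refinement of the paper's terser argument.
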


\begin{proof}
Symmetry and reflexivity of $\sim$ are clear.
Let us prove transitivity:
assume $(x,i) \sim (y,j)$ and $(y,j) \sim (z,k)$.
There exist
$i'\leq i$, $j'\leq j$, $j'' \leq j$, $k' \leq k$ such that
$(i',j'), (j'',k'') \in E$ and 
$\phi_{i'j'}(y)=x$, $\phi_{j'' k'}(z)=y$.
Using (4), we let
$m:= [j' j'']$.
By property (Epos) there exist $i''$ and $k''$ such that
$i'' \leq i'$, $(i'',m) \in E, k'' \leq k'$, $(k'',m) \in E$. 
By (3) and transitivity of $L$ and $E$,  it follows that $(x,i) \sim (z,k)$.
The whole argument is summarized by the following diagram:
\begin{comment}%%%%%%%%%%%%%%%%%%%%
\begin{center}
\psset{xunit=0.5cm,yunit=0.5cm,algebraic=true,dotstyle=o,dotsize=3pt 0,linewidth=0.8pt,arrowsize=3pt 2,arrowinset=0.25}
\begin{pspicture*}(-1.3,-1.16)(17.08,5.3)
\psline(1,0)(12.94,0)
\psline(6.97,0)(9,2)
\psline[linewidth=0.4pt](9,2)(15,2)
\psline(15,2)(12.94,0)
\psline(13,4)(15,2)
\psline(9,2)(7,4)
\psline(7,4)(5,2)
\psline(5,2)(6.97,0)
\psline[linewidth=0.4pt](5,2)(-1,2)
\psline(-1,2)(1,0)
\psline(1,4)(-1,2)
\begin{scriptsize}
\psdots[dotsize=4pt 0,dotstyle=*](1,0)
\rput[bl](1.08,0.36){$(x,i'')$}
\psdots[dotsize=4pt 0,dotstyle=*](12.94,0)
\rput[bl](13.68,0.06){$(z,k'')$}
\psdots[dotsize=4pt 0,dotstyle=*](6.97,0)
\rput[bl](7.76,0.3){$(y,m)$}
\psdots[dotsize=4pt 0,dotstyle=*](9,2)
\rput[bl](9.08,2.16){$(y,j'')$}
\psdots[dotsize=4pt 0,dotstyle=*](15,2)
\rput[bl](15.08,2.16){$(z,k')$}
\psdots[dotsize=4pt 0,dotstyle=*](13,4)
\rput[bl](13.4,3.92){$(z,k)$}
\psdots[dotsize=4pt 0,dotstyle=*](7,4)
\rput[bl](7.36,3.98){$(y,j)$}
\psdots[dotsize=4pt 0,dotstyle=*](5,2)
\rput[bl](5.46,1.98){$(y,j')$}
\psdots[dotsize=4pt 0,dotstyle=*](-1,2)
\rput[bl](-0.3,2.2){$(x,i')$}
\psdots[dotsize=4pt 0,dotstyle=*](1,4)
\rput[bl](1.32,3.98){$(x,i)$}
\end{scriptsize}
\end{pspicture*}
\end{center}
\end{comment}%%%%%%%%%%%%%%%%%%%%%%
\begin{comment}%% not so nice with a usual matrix:
$$
\begin{matrix}
 \, \,  (x,i ) & &  (y,j) & & (z,k) & \cr
\slash & &  \slash \quad   \backslash  & & \backslash & \cr
(x,i') \quad  & \mbox{---} & (y,j')  \quad  (y,j'') & \mbox{---} & \quad (z,k') \cr
\backslash & & \backslash \quad   \slash & & \slash & \cr
\, \,  (x,i'') &\mbox{  ---}&   (y,m) & \mbox{---  }& (z,k'') & \cr
\end{matrix}
$$
\end{comment}%%%
$$
\xymatrix{
 & (x,i) \ar@{-}[ld] & & (y,j) \ar@{-}[ld]\ar@{-}[rd] & & (z,k)  \ar@{-}[rd] & \\
(x,i') \ar@{-}[rr] \ar@{-}[dr]& & (y,j') \ar@{-}[rd] & & (y,j'') \ar@{-}[rr]  \ar@{-}[dl]& & (z,k') \ar@{-}[ld] \\
 & (x,i'' )\ar@{-}[rr] & & (y,m) \ar@{-}[rr] & & (z,k'')  
}
$$

\ssk
Now let $M= S/\sim$ and define the $U_i$  as in the claim. Then each $[x,i] \in M$ belongs to some $U_i$, so the $U_i$ form a covering of $M$.
Let us  show that $\phi:U_i \to V_i$ is well-defined. 
When $U_i$ is defined as in the claim, the map
$V_i \to U_i$, $x \mapsto [x,i]$ is surjective. Let's show that it is injective:
assume $[x,i]=[y,i]$, so there exists $i' \leq i,j' \leq i$ with $\phi_{i'j'}(y)=x$.
But according to Remark \ref{rk:uniqueness}, this implies $i' = j'$, 
and so $x = \phi_{i'i'}(y)=y$. 
It follows that $V_i \to U_i$ is bijective, and hence its inverse
$\phi_i: U_i \to V_i$, $[x,i]\to x$ is well-defined.
Moreover, it follows that (using (3) for the last equality)
$$
\phi_j (\phi_i^{-1} (x)) = \phi_j [x,i] = 
\phi_j [\phi_{j'i'}(x),j] = \phi_{j'i'}(x) = \phi_{ji}(x) \, ,
$$
so the $\phi_{ij}$ indeed  describe the transition functions between the charts. 
This implies that, if $(i,j) \in E$, then $U_i = U_j$.

Let's show that (3') from Def.\ \ref{def:atlas} holds:
assume $[x,i]=[y,j] \in (U_i \cap U_j)$.
There exist $i' \leq i, j' \leq j$ such that $(i',j')\in E$ (whence $U_{i'}=U_{j'}$) and
$\phi_{i'j'}(y)=x$.
Thus (3') holds by taking $m=i'$. 
\end{proof}

\begin{remark}
We get ``almost'' an equivalence between atlasses and atlas data, the only difference being that, starting with (1), (2), (3) from
Def.\ \ref{def:atlas}, we only recover (1), (2), (3').
The reason for this is that, in the procedure of reconstrucing, we loose control over size of chart intersections (we juste require that they are 
non-empty). It would be a bit technical to impose conditions allowing  to keep such control, and we refrain from this. 
\end{remark}

\begin{theorem}
Assume that atlas data are topological and equip $M$ with the final topology with respect to all maps $\phi_i^{-1}:V_i \to M$
(so $U \subset M$ is open iff, for all $i \in I$, the set $\phi_i(U \cap U_i)$ is open in $V$). 
Then all charts $\phi_i:U_i \to V_i$ are homeomorphisms.
\end{theorem}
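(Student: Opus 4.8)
The plan is as follows. By Theorem \ref{th:reconstruct} each chart $\phi_i:U_i\to V_i$ is already a bijection, so it suffices to prove that both $\phi_i$ and its inverse $\phi_i^{-1}:V_i\to U_i$ are continuous, where $U_i$ carries the subspace topology from $M$. Continuity of $\phi_i^{-1}$ is essentially built into the construction: the final topology on $M$ makes every map $\phi_j^{-1}:V_j\to M$ continuous, and since $\phi_i^{-1}$ has image exactly $U_i$, its corestriction $V_i\to U_i$ is continuous as well. The substance of the proof is therefore the continuity of $\phi_i:U_i\to V_i$, equivalently the statement that $\phi_i^{-1}:V_i\to U_i$ is an open map.

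For this I would fix an open set $O\subseteq V_i$ and show that the subset $\phi_i^{-1}(O)=\{[x,i]\mid x\in O\}$ of $M$ is in fact open in $M$ (hence a fortiori open in $U_i$); incidentally, the case $O=V_i$ also shows that each $U_i$ is open in $M$. By the description of the final topology in the statement, this reduces to checking, for every $j\in I$, that $\phi_j\big(\phi_i^{-1}(O)\cap U_j\big)$ is open in $V$. The key computation is to unwind the equivalence relation $\sim$ of Theorem \ref{th:reconstruct}: a point $y\in V_j$ lies in $\phi_j\big(\phi_i^{-1}(O)\cap U_j\big)$ precisely when $(y,j)\sim(x,i)$ for some $x\in O$, i.e.\ when there exist $i'\leq i$ and $j'\leq j$ with $(i',j')\in E$, $x\in V_{i'}$, $y\in V_{j'}$ and $\phi_{i'j'}(y)=x$. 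Using $\phi_{i'j'}^{-1}=\phi_{j'i'}$ together with the inclusion $V_{i'}\subseteq V_i$ from condition (3) of atlas data, this yields the identity
$$
\phi_j\big(\phi_i^{-1}(O)\cap U_j\big)=\bigcup_{\substack{i'\leq i,\ j'\leq j\\ (i',j')\in E}}\phi_{j'i'}\big(O\cap V_{i'}\big).
$$
Now each $O\cap V_{i'}$ is open in $V$ since $O$ and $V_{i'}$ are, and since the atlas data are topological the bijection $\phi_{j'i'}:V_{i'}\to V_{j'}$ is a homeomorphism; hence $\phi_{j'i'}(O\cap V_{i'})$ is open in $V_{j'}$, and therefore open in $V$. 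A union of open sets being open, $\phi_j\big(\phi_i^{-1}(O)\cap U_j\big)$ is open, which is exactly what was needed.

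The one step requiring genuine care is the displayed identity. One has to turn the ``two-step'' definition of $\sim$ --- descend to equivalent indices $i',j'$, then apply a transition map --- into a precise set-theoretic equality, watching which variable lies in which $V_\bullet$ and in which direction each $\phi_{i'j'}$ runs. Once that bookkeeping is done correctly, everything else is a direct application of the definition of the final topology and of the hypothesis that all transition maps $\phi_{ij}$ are homeomorphisms; in particular no use of the axiom (Epos) or of condition (4) is needed here.
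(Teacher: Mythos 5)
Your proof is correct and follows essentially the same route as the paper: continuity of $\phi_i^{-1}$ is immediate from the final topology, and continuity of $\phi_i$ is reduced via the final topology to openness of the sets $\phi_j\bigl(\phi_i^{-1}(O)\cap U_j\bigr)$, which the paper handles by restricting to smaller equivalent charts and observing every point is interior. Your explicit union formula over pairs $i'\leq i$, $j'\leq j$ with $(i',j')\in E$ is just a cleaner, fully written-out version of that same step, and the bookkeeping in it is accurate.
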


\begin{proof}
The maps $\phi_i^{-1}$ are continuous by definition of the topology on $M$.
To see that $\phi_j:U_j \to V_j$ is continuous,  let $W \subset V_j$ be open.
We have to show that, for all $i \in I$, the set
$Z:=\phi_i ( U_i \cap \phi_j^{-1} (W))$ is open in $V_j$.
When $(i,j) \in E$, then we have
$Z = \phi_{ij} (W)$, which is open since $\phi_{ij}$ is a homeomorphism.
Else, using  property (3') of Definition \ref{def:atlas}, restrict to smaller charts $i'$, $j'$ with $(i',j') \in E$, 
and the same argument implies that each point of $Z$ is an inner point, and so $Z$ is open.
\end{proof}

\nin
See \cite{BeNe05}, Theorem 5.3, for examples of smooth manifolds obtained by the construction described in  the  theorem.
Note that, already for usual, real manifolds, $M$ need not be Hausdorff, even if $V$ is Hausdorff. 
A simple counter-example is given by atlas data
$V_1 = V_2 = \R$, $V_{12} = \R^\times$, $\phi_{12}(x)=x$, so $M$ is ``$\R$ with origin doubled''.

\begin{theorem}\label{th:tangent}
Assume $M$ has an atlas $\cA$ of type $G=(G_0,G_1)$, and
assume that $T$ is a functor from $G$
to a pseudogroup $H=(H_0,H_1)$ acting on a space $W=TV$.
Then 
we may define a manifold $TM$ with atlas $T\cA$ given by all $(TV_k, T\phi_{ij})_{k\in I,(i,j)\in E}$, which is modelled on $W$ and of type $H$.
\end{theorem}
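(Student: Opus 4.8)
\textbf{The plan} is to carry the whole construction through $T$ at the level of \emph{atlas data}, and then invoke the reconstruction results of Section \ref{sec:atlasdata}. Recall (from the summary at the end of Section \ref{sec:atlas}) that an atlas $\cA$ of type $G$ is exactly a morphism of ordered groupoids
\[
\Phi : (I,E;L) \longrightarrow (G_0,G_1;\subset),\qquad \bigl(k,\,(i,j),\,(i'\le i)\bigr)\longmapsto \bigl(V_k,\,\phi_{ij},\,(V_{i'}\subset V_i)\bigr),
\]
into the pseudogroup $G$. I would post-compose with $T$ to obtain a candidate $T\Phi$, with chart ranges $TV_k$, transition maps $T\phi_{ij}$ and restrictions $(TV_{i'}\subset TV_i)$, and then verify that $T\Phi$ satisfies conditions (1)--(4) of the definition of atlas data (Section \ref{sec:atlasdata}) with model space $W=TV$. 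Once that is done, Theorem \ref{th:reconstruct} applied to $T\Phi$ produces a set $TM := S'/\sim$, with $S'=\{(w,i)\mid w\in TV_i\}$, carrying an atlas $T\cA$ with charts $T\phi_i : TU_i\to TV_i$, $TU_i := \{[w,i]\mid w\in TV_i\}$, whose transition maps are precisely the $T\phi_{ij}$.

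Conditions (1) and (2) are immediate: $TV_i\in H_0\subset\cP(W)$ for each $i$; for $(i,j)\in E$ the map $T\phi_{ij}:TV_j\to TV_i$ lies in $H_1$ and is a bijection; and the identities $T\phi_{ii}=\id_{TV_i}$, $T\phi_{ij}\circ T\phi_{jk}=T\phi_{ik}$, $T\phi_{ij}^{-1}=T\phi_{ji}$ follow by applying the functor $T$ to the corresponding identities satisfied by the $\phi_{ij}$. For condition (3) I would use that $T$, being a functor of ordered groupoids (a morphism of pseudogroups, cf.\ Appendix \ref{app:OG}), preserves the order and the restriction operation: so $i'\le i$ gives $TV_{i'}\subset TV_i$, and since condition (3) for $\Phi$ says that $\phi_{i'j'}$ is the restriction of $\phi_{ij}$ to $V_{j'}$, it follows that $T\phi_{i'j'}$ is the restriction of $T\phi_{ij}$ to $TV_{j'}$, i.e.\ the two agree on $TV_{j'}$.

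\textbf{The main obstacle} is condition (4): given $i\le m$, $j\le m$ with $TV_i\cap TV_j\neq\emptyset$, one must exhibit a unique $k$ with $k\le i$, $k\le j$ and $TV_k=TV_i\cap TV_j$. This is where the precise notion of functor matters: one needs that a functor of pseudogroups commutes with the binary intersections of objects occurring in the atlas-data axioms and carries non-empty objects to non-empty ones (I expect this to be part of the definition in Appendix \ref{app:OG}). Granting it, $TV_i\cap TV_j\neq\emptyset$ forces $V_i\cap V_j\neq\emptyset$, so condition (4) for $\Phi$ yields the unique $k=[i,j]$ with $k\le i$, $k\le j$, $V_k=V_i\cap V_j$, and then $TV_k=T(V_i\cap V_j)=TV_i\cap TV_j$. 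The uniqueness of $k$ is a property of the e-pos $(I,E,L)$ alone --- it is the uniqueness asserted in condition (4) for $\Phi$, which does not involve $T$ --- hence is automatic. Thus $T\Phi$ is valid atlas data over $W$.

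Finally I would collect the conclusions. Theorem \ref{th:reconstruct} gives $TM$ with the atlas $T\cA$ (satisfying (3') of Definition \ref{def:atlas}) and transition maps $T\phi_{ij}$; since $TV_k\in H_0$ and $T\phi_{ij}\in H_1$ by construction, $T\cA$ is of type $H$. As $H$ consists of homeomorphisms of the topological space $W$, the data $T\Phi$ are topological, so the above theorem on topological atlas data applies: equipping $TM$ with the final topology with respect to the maps $T\phi_i^{-1}$ turns all charts $T\phi_i$ into homeomorphisms. Hence $TM$ is a manifold modelled on $W$ and of type $H$, as claimed.
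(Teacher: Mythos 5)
Your proposal follows exactly the paper's route: the paper's entire proof is the one-line observation that the $(TV_k, T\phi_{ij})$ are again atlas data for the same e-pos $(I,E,L)$, so that Theorem \ref{th:reconstruct} applies. Your careful verification of conditions (1)--(4) --- in particular your observation that condition (4) silently requires the functor $T$ to carry $V_i\cap V_j$ to $TV_i\cap TV_j$ and to preserve non-emptiness, a hypothesis the paper never states and which is not actually spelled out in Appendix \ref{app:OG} --- only makes explicit what the paper leaves implicit, so it is a welcome refinement rather than a deviation.
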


\begin{proof}
$T\cA = (TV_k, T\phi_{ij})_{k\in I,(i,j)\in E}$ are again atlas data, for the same e-pos $(I,E,L)$, and hence give rise, by the preceding theorem,
to an atlas.
\end{proof}

\nin
For instance, if all $\phi_{ij}$ are smooth, then $T$ may be the tangent functor, or any other Weil functor.
Thus one constructs the tangent bundle, or other Weil bundles, of a manifold $M$
(cf.\ \cite{BeS14}).

\section{Examples;  maximal atlases}\label{sec:max}

From an economical viewpoint, one is interested in keeping atlases of manifolds as small as possible. On the other hand, for 
theoretical purposes, most mathematicians are used to work with {\em maximal} atlases.

\begin{example}
An atlas is trivial, $I=\{ e \}$ and $\phi_{ee} = \id$, if and only if $M= U_e = V_e \subset V$, with just one chart. 
\end{example}

\begin{example}
% An atlas has e-pos $\xymatrix{ a \ar@{-}[r] & b }$ iff it is given by a set $U$ together with two different bijections onto subsets of $V$.
An atlas has e-pos of the form given in Example \ref{ex:sphere} (sphere) iff $M$ arises from ``gluing together'' two subsets of $V$
along certain proper subsets which are identified via a bijection $\phi$.
\end{example}

\begin{example}\label{ex:projectiveplane2}
Consider the e-pos with 12 elements belonging to the projective plane (Introduction and Example
\ref{ex:projectiveplane1}), and let
$V = V_0 = V_1 = V_2 = \K^2$, and
\begin{align*}
V_{01}  = \{ (u,v) \mid u \not= 0 \} = V_{02},  \quad & V_{12} = \{ (u,v) \mid v \not= 0 \}, \quad
V_{012} = \{ (u,v) \mid u \not= 0, v\not= 0 \} .
\\
& \phi_{01}(u,v) = (u^{-1}, u^{-1} v ), 
\\
& \phi_{02}(u,v)  = (u^{-1}v,u^{-1}),
\\
& \phi_{12}(u,v)  = (v^{-1}u,v^{-1}) .
\end{align*}
By direct computation, the reader may check that each of these maps is of order two (so 
$\phi_{\nu \mu} = (\phi_{\mu \nu})^{-1} =\phi_{\mu \nu}$) iff in $\K$ the identities
$$
(x^{-1})^{-1} = x, \qquad x (x^{-1} y) = y
$$ 
are satisfied. Likewise, we have $\phi_{01} \circ \phi_{12}=\phi_{02}$ on $V_{012}$ iff, moreover in $\K$ we have: 
$$
(xy)^{-1} x = y^{-1} .
$$
Now, it is well-known that these identities hold in any alternative field (in particular, for $\K={\mathbb O}$, the octonions).
Thus our reconstruction theorem implies that for all alternative fields we may glue together copies of $\K^2$ to get 
a projective plane over $\K$.
(Essentially, this way of constructing the octonion plane is the one described by Aslaksen, \cite{As91}.)
If $\K$ is associative, similar formulas permet to describe the groupoid of transition functions. It is special for this example
that this groupoid embeds into a finite subgroup of the projective group $\PP\GL(n+1;\K)$.
% define $\K\PP^n$ for $n>2$, but mere alternativity does no loger suffice
%since we have to deal also with expressions like $(y^{-1}x)^{-1} yz$. 
\end{example}

\begin{example}\label{ex:full}
Assume $M=V$. The biggest possible atlas  is given by {\em all} possible local bijections of $V$, that is,
$I = \cB_{loc}(V)$, where an index $g$ is identified with the chart $g:\dom(g) \to \im(g)$ it describes.
 Then $L$ is given by inclusion: $f \leq g$ if $f$ is a restriction and corestriction of $g$, and
$$
E = \{ (f , g) \in \cB_{loc}(V)^2 \mid \, \dom(f)=\dom(g) \} 
$$
The transition functions are
$\phi_{fg}(x)= f g^{-1}(x)$. It follows that
the morphism of the e-pos $(I,E,F)$ to $(\cP(V),\cB_{loc} (V))$ is given by
$$
I \to \cP(V), \, f \mapsto \im(f), \qquad
E \to \cB_{loc}(V), \, (f,g) \mapsto f \circ g^{-1} .
$$
This atlas is {\em maximal} in the sense to be described next.
If $V$ carries a topology, then of course one will take only open sets and homeomorphisms.
\end{example}

\begin{example}\label{ex:lin}
If   $M=V$, we may also take $I={\rm Bij}(V)$, the group of all bijections (or some subgroup, like the group $\Gl(V)$ if $V$
is a linear space). 
Then $E = I \times I$, and $L$ is trivial.
Transition functions are as in the preceding example.
\end{example}

\begin{theorem}
Assume $\cA$ is a $G$-atlas on $M$, modelled on $V$, where $G$ is a pseudogroup of transformations on $V$.
Then there exists a {\em maximal $G$-atlas $\tilde A$ containing $\cA$}, which is defined by:
$$
\tilde \cA = \Bigsetof{ (U,\phi)}{ \begin{array} {c} \phi: U \to W \mbox{ bijection, } W \in G_0, U \subset M,   \\ 
\forall i \in I : [ \phi_i \circ \phi^{-1} : \phi(U \cap U_i) \to \phi_i(U \cap U_i) ] \in G_1 \end{array}
 }
$$
\end{theorem}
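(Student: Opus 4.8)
The goal is to verify that $\tilde\cA$, as defined, really is a $G$-atlas on $M$ containing $\cA$, and that it is maximal among $G$-atlases. I would organize the proof into four checks: (a) $\tilde\cA$ contains $\cA$; (b) $\tilde\cA$ satisfies the atlas axioms (1), (2), (3) (or at least (3')) of Definition \ref{def:atlas}; (c) $\tilde\cA$ is of type $G$, i.e.\ all its chart ranges lie in $G_0$ and all its transition maps lie in $G_1$; (d) any $G$-atlas on $M$ extending $\cA$ is contained in $\tilde\cA$ (maximality), so in particular $\tilde\cA$ is itself maximal. The index set of $\tilde\cA$ is the set of pairs $(U,\phi)$ in the displayed description, with a chart $(U,\phi)$ identified with its underlying bijection (so repetition, axiom (1), is automatic since we index by the charts themselves).

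\textbf{Containment and type.} For (a): given $i\in I$, the chart $\phi_i:U_i\to V_i$ satisfies $V_i\in G_0$ by hypothesis, and for every $j\in I$ the map $\phi_j\circ\phi_i^{-1}$ is (a restriction/corestriction of) the transition map $\phi_{ji}$, which lies in $G_1$ because $\cA$ is of type $G$ and because pseudogroups are closed under restriction to open subsets (Def.\ \ref{def:pseudogroup}); hence $(U_i,\phi_i)\in\tilde\cA$. For (c): by construction every $(U,\phi)\in\tilde\cA$ has $W=\phi(U)\in G_0$. For a transition map between $(U,\phi)$ and $(U',\phi')$ in $\tilde\cA$, I would cover $\phi(U\cap U')$ by the open sets $\phi(U\cap U'\cap U_i)$, $i\in I$ — this covers because the $U_i$ cover $M$ and $\phi$ is a homeomorphism onto an open set — and on each such piece write $\phi'\circ\phi^{-1} = (\phi'\circ\phi_i^{-1})\circ(\phi_i\circ\phi^{-1})$, a composite of two elements of $G_1$ (one from the defining condition on $(U',\phi')$, one from that on $(U,\phi)$, after suitable restriction). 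Since $G_1$ is closed under composition and under gluing of locally-$G_1$ maps (the pseudogroup axioms), $\phi'\circ\phi^{-1}\in G_1$.

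\textbf{Atlas axioms and maximality.} For (b): covering, axiom (2), is immediate from (a) since $\cA$ already covers $M$. For the intersection/restriction axiom, given $(U,\phi),(U',\phi')\in\tilde\cA$ and a point $x\in U\cap U'$, I would check that the restriction $(U\cap U',\phi|_{U\cap U'})$ — or, if one wants genuine (3), this same set when $U\cap U'$ is open, which it is in $M$ — again lies in $\tilde\cA$: its range $\phi(U\cap U')$ is open in $W\in G_0$ hence in $G_0$ (pseudogroups are closed under passing to open subsets), and for each $i$ the map $\phi_i\circ(\phi|_{U\cap U'})^{-1}$ is a restriction of $\phi_i\circ\phi^{-1}\in G_1$, hence in $G_1$. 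So (3) holds, and it furnishes the required index $[\,(U,\phi),(U',\phi')\,]=(U\cap U',\phi|_{U\cap U'})$; note axiom (3) subsumes (1). Finally, for maximality: if $\cB\supset\cA$ is any $G$-atlas on $M$ and $(U,\phi)\in\cB$, then $\phi(U)\in G_0$, and for each $i\in I\subset\cB$ the transition map of $\cB$ between $\phi$ and $\phi_i$ is exactly $\phi_i\circ\phi^{-1}$ restricted to $\phi(U\cap U_i)$, which lies in $G_1$ since $\cB$ is of type $G$; thus $(U,\phi)$ satisfies the defining condition of $\tilde\cA$, so $\cB\subset\tilde\cA$. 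Applying this with $\cB=\tilde\cA$ itself (once (b),(c) are known) shows $\tilde\cA$ is maximal.

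\textbf{Main obstacle.} The only delicate point is the gluing/localization step in (c): one must know that a bijection between open subsets of $V$ which is locally (on an open cover) in $G_1$ is globally in $G_1$. This is precisely one of the pseudogroup axioms in Definition \ref{def:pseudogroup}, so it is available, but it is the hinge of the argument — without it, the defining condition "$\phi_i\circ\phi^{-1}\in G_1$ for all $i$" would not be strong enough to force the transition maps of $\tilde\cA$ into $G_1$. Everything else is routine bookkeeping with restrictions and the identification of charts with their underlying bijections.
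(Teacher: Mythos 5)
Your proposal is correct and follows essentially the same route as the paper, whose own proof is only a sketch: it identifies exactly the hinge you name, namely that transition maps of $\tilde\cA$ are locally in $G_1$ after intersecting with charts of $\cA$, and then invokes the local-to-global property (PsG) of Definition~\ref{def:pseudogroup} to conclude. Your write-up supplies the bookkeeping the paper explicitly leaves to the reader (containment of $\cA$, closure under restriction for axiom (3), and the maximality argument), so it is a faithful elaboration rather than a different proof.
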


\begin{proof}
This statement is standard in many differential geometry textbooks (e.g.,
 \cite{KoNo63}, p.2).  The main point is 
  to check that $\phi \circ \psi^{-1} \in G_1$, for any two charts $\phi,\psi$ having same domain (or having non-empty intersection).
This is
locally  true, by intersecting with suitable charts from $\cA$, and the local-to-global property (PsG) of a pseudogroup (Def.\ \ref{def:pseudogroup})
permits to conclude that $\phi \circ \psi^{-1} \in G_1$.
We leave it to the reader to formalize these arguments (much like the proof of Theorem \ref{th:reconstruct}).
\end{proof}

\begin{definition}
A {\em $(V,G)$-manifold} is a set $M$  with a maximal $G$-atlas.
A  {\em smooth manifold (over $\K$)} is a $(V,G)$-manifold, where
 $V$ is a topological $\K$-module over a topological ring $\K$, and $G$ the pseudogroup of locally defined diffeomorphisms of $V$.
\end{definition}

\section{From morphisms to morphism data}

In this section, assume $(M,\cA, V)$ and $(M',\cA',V')$ are manifolds with atlas, and $f: M \to M'$ a map.
We describe $f$ with respect to the atlases.

\begin{definition}
Let $(I,E,L)$, resp.\ $(I',E',L')$ be the e-pos of $M$, resp.\ of $M'$.
Then $f$ induces  binary relations $F \subset (I \times I')$ and $R \subset (E \times E')$
by
\begin{align*}
F := & \bigl\{ (i',i) \in I \times I \mid  \,  f(U_i) \subset U_{i'} \bigr\} ,
\\
R := & \bigl\{ 
\bigl( (i,k), (i',k') \bigr) \in E \times E'  \mid \, 
(i',i),(k',k) \in F \bigr\} \, .
\end{align*}
% [Rk the condition $(k',k) \in F$ follows from the others since $U_{k'}=U_{i'}$ !]
\end{definition}

\nin
Elements $(i,k,i',k') \in R$ may be represented by a  parallelogram:
$$
\xymatrix{ i \ar@{-} [rr]^E \ar@{-}[dr]^F  & &  k \ar@{-}[dr]^F \\
& i' \ar@{-}[rr]^{E'} & & k' }
$$

% attn: convention $(i',i)$ : source on the right, target on the left !

\begin{lemma}
The relation $F$  respects the partial order $L$ in the sense that:
\begin{enumerate}
\item
if $(i',i) \in F$ and $k \leq i$, then $(i',k) \in F$,
\item
if $(i',i) \in F$ and $i' \leq m'$, then $(m',i) \in F$.
\end{enumerate}
\end{lemma}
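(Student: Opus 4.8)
The plan is simply to unwind the two definitions involved and apply the monotonicity of images: if $A \subset B$ then $f(A) \subset f(B)$. Recall that, by definition, $(i',i) \in F$ means exactly $f(U_i) \subset U_{i'}$ (here $i \in I$ is a chart index of $M$ and $i' \in I'$ a chart index of $M'$), and that the partial order attached to an atlas was set up precisely so that $k \leq i$ in the e-pos $(I,E,L)$ of $M$ forces the inclusion of chart domains $U_k \subset U_i$, and likewise $i' \leq m'$ in the e-pos $(I',E',L')$ of $M'$ forces $U_{i'} \subset U_{m'}$.

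For (1): suppose $(i',i) \in F$ and $k \leq i$. From $k \leq i$ we get $U_k \subset U_i$, hence $f(U_k) \subset f(U_i) \subset U_{i'}$, and this chain of inclusions says precisely that $(i',k) \in F$. For (2): suppose $(i',i) \in F$ and $i' \leq m'$. From $i' \leq m'$ we get $U_{i'} \subset U_{m'}$, hence $f(U_i) \subset U_{i'} \subset U_{m'}$, which says precisely that $(m',i) \in F$.

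I do not expect any genuine obstacle: the entire content is the observation that membership in $F$ is tautologically a statement about containment of images, together with the fact that $L$ (resp.\ $L'$) refines containment of chart domains. The only thing to be careful about is bookkeeping — in (1) the inequality $\leq$ refers to the partial order $L$ on the index set $I$ of the source manifold $M$, whereas in (2) it refers to the partial order $L'$ on the index set $I'$ of the target $M'$ — and once this is kept straight, both assertions are one-line verifications.
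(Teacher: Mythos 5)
Your proof is correct and follows exactly the paper's own argument: both parts reduce to the observation that $k\leq i$ (resp.\ $i'\leq m'$) forces the inclusion of chart domains, and then one composes inclusions with $f(U_i)\subset U_{i'}$. Your extra remark about which partial order ($L$ on $I$ versus $L'$ on $I'$) is in play in each part is accurate bookkeeping and matches the paper's terse ``(2): similar.''
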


\begin{proof}
(1) $f(U_i) \subset U_{i'}$ and $U_k \subset U_i$ implies $f(U_k) \subset U_{i'}$.
(2): similar.
\end{proof}

% Note: the lemma introduces a direction (symmetry breaking)

\begin{definition}
Whenever $(i',i) \in F$, we define the {\em $(i',i)$-component of $f$} by
$$
f_{i' i} : =\phi_{i'} \circ f \circ \phi_i^{-1} :   V_i \to V_{i'}  .
$$
\end{definition}

\begin{lemma}
Whenever $\bigl( (i,k), (i',k') \bigr) \in R$, then 
$\phi_{k' i'} \circ f_{i' i} = f_{k' k} \circ \phi_{ki}$:
$$
\xymatrix{V_i \ar[r]^{f_{i' i}} \ar[d]^{\phi_{ik}} & V_i'  \ar[d]^{\phi_{k'i'}'} \\
V_k \ar[r]^{f_{k'k}} & V_k' }
$$
\end{lemma}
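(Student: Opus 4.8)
The plan is to unwind all three definitions ($f_{i'i}$, $f_{k'k}$, the transition maps $\phi_{ki}$ and $\phi'_{k'i'}$) as composites of $\phi$'s, $f$, and $\phi^{-1}$'s, and then verify the claimed square commutes by plain substitution, checking along the way that every intermediate composite is defined on the set where we need it. Concretely, $f_{i'i} = \phi'_{i'}\circ f\circ\phi_i^{-1}$ on $V_i$ and $\phi'_{k'i'} = \phi'_{k'}\circ(\phi'_{i'})^{-1}$ on $V'_{i'}$, so
$$
\phi'_{k'i'}\circ f_{i'i} = \phi'_{k'}\circ(\phi'_{i'})^{-1}\circ\phi'_{i'}\circ f\circ\phi_i^{-1} = \phi'_{k'}\circ f\circ\phi_i^{-1},
$$
while on the other side $f_{k'k} = \phi'_{k'}\circ f\circ\phi_k^{-1}$ and $\phi_{ki} = \phi_k\circ\phi_i^{-1}$, so
$$
f_{k'k}\circ\phi_{ki} = \phi'_{k'}\circ f\circ\phi_k^{-1}\circ\phi_k\circ\phi_i^{-1} = \phi'_{k'}\circ f\circ\phi_i^{-1}.
$$
The two agree, which is the assertion.

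The real content is keeping track of domains, since $\phi_i^{-1}$ only lands in $U_i$ and we need $f$ applied there to land inside $U_{k'}$ (so that $\phi'_{k'}$ makes sense) and also inside $U_{i'}$ (so that the cancellation $(\phi'_{i'})^{-1}\circ\phi'_{i'}=\id$ is legitimate on the relevant image). From $\bigl((i,k),(i',k')\bigr)\in R$ we have by definition $(i',i)\in F$ and $(k',k)\in F$, i.e. $f(U_i)\subset U_{i'}$ and $f(U_k)\subset U_{k'}$. Since $(i,k)\in E$ we have $U_i = U_k$, hence $f(U_i)\subset U_{i'}\cap U_{k'}$; and since $(i',k')\in E'$ we have $U_{i'} = U_{k'}$, so in particular $V_{i'} = \phi'_{i'}(U_{i'})$ and $V'_{k'}$ are related by the bijection $\phi'_{k'i'}:V'_{i'}\to V'_{k'}$. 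Thus $\phi_i^{-1}(V_i) = U_i = U_k$ has image under $f$ inside $U_{i'}=U_{k'}$, so $\phi'_{i'}\circ f\circ\phi_i^{-1}$ and $\phi'_{k'}\circ f\circ\phi_i^{-1}$ are both well-defined on all of $V_i$, and the cancellation is valid pointwise.

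I would therefore write the proof as: (1) observe $U_i=U_k$ and $U_{i'}=U_{k'}$ from $(i,k)\in E$, $(i',k')\in E'$; (2) observe $f(U_i)\subset U_{i'}$ and $f(U_k)\subset U_{k'}$ from $(i',i),(k',k)\in F$, so $f$ maps $U_i=U_k$ into $U_{i'}=U_{k'}$; (3) fix $x\in V_i$, put $p:=\phi_i^{-1}(x)\in U_i=U_k$, so $f(p)\in U_{i'}=U_{k'}$; (4) compute both sides evaluated at $x$ and see they equal $\phi'_{k'}(f(p))$. The only mild subtlety — and the closest thing to an obstacle — is making sure one does not need condition (3) or (Epos) here: one does not, because $(i,k)\in E$ and $(i',k')\in E'$ already force the domain equalities, and the transition maps appearing are exactly the $\phi_{ki}$, $\phi'_{k'i'}$ associated to these $E$-pairs, whose defining composite formulas are all we use. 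I expect this to be a short, purely formal verification, very much in the spirit of "this follows directly from the definitions" as in the earlier lemmas of the paper.
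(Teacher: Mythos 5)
Your proof is correct and is exactly the verification the paper compresses into ``Immediate from the definition of the $\phi_{ij}$ and $f_{j'j}$'': unwind both sides to $\phi'_{k'}\circ f\circ\phi_i^{-1}$, using $U_i=U_k$, $U_{i'}=U_{k'}$ and $f(U_i)\subset U_{i'}$ to justify the cancellations. Your explicit bookkeeping of domains is a faithful (and more careful) elaboration of the same argument.
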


\begin{proof} 
Immediate from the definition of the $\phi_{ij}$ and $f_{j'j}$.
\end{proof}

If we want to reconstruct $f$ from the data $(f_{i'i})_{(i',i)\in F}$, then every $x$ must admit a neighborhood on which
$f$ is described by some component. This is not automatic -- it is a condition, very much like ``ordinary continuity'':

\begin{definition} 
We say that $f$ {\em is atlas-continuous} if, 
for all $x \in M$ and all $j \in I'$ with $f(x) \in V_j$, there exists $i\in I$ with $x \in V_i$ and
$f(V_i) \subset V_j$ (that is, $(j,i) \in F$). 
\end{definition}

\begin{remark}
If the atlas of $M$ is saturated, then this condition amounts to saying that $f$ is continuous in the usual sense.
\end{remark}

\section{From morphism data to morphisms}

We shall reconstruct $f$ from the ``morphism data'' $f_{ij}$. More precisely,
we shall see that {\em morphism data} are certain natural relations of certain  ordered groupoids (cf.\ Def.\ \ref{def:naturalrelation}).
Written out, this means:

\begin{definition}\label{def:morphdata}
Assume $(I,E,L,(\phi_{ij})_{(i,j) \in E})$ and $(I',E',L',\phi_{i'j'})_{(i',j') \in E'}$ are atlas data belonging to manifolds with atlases
$(M,\cA), (M',\cA')$. 
{\em
Morphism data} between them are given by:
a {\em special natural relation $(F,R)$} between the e-pos $(I,E,L)$ and $(I',E',L')$, 
that is, a pair of binary relations
$(F,R) \subset (\cP(I \times I') \times \cP(E \times E'))$ such that: whenever
$((i,k),(i',k')) \in R$, then $(i',i) \in F$ and $(k',k) \in F$,
and, for each
$(i',i) \in F$, a map $f_{i' i}:V_i \to V_{i'}$ such that, whenever
$\bigl( (i,k), (i',k') \bigr) \in R$, then
$$
\xymatrix{V_i \ar[r]^{f_{i' i}} \ar[d]^{\phi_{ik}} & V_i'  \ar[d]^{\phi_{k'i'}'} \\
V_k \ar[r]^{f_{k'k}} & V_k' }
\eqno (*)
$$
Moreover, the following {\em restriction} and {\em co-restriction properties} shall be satisfied:
\begin{enumerate}
\item
if $(i',i) \in F$ and $k \leq i$, then $(i',k) \in F$ and
$f_{i'k} = f_{i'i}\vert_{V_k}$,
\item
if $(i',i) \in F$ and $i' \leq m'$, then $(m',i) \in F$ and $\forall x \in V_i$:
$f_{i'i} (x)= f_{m'i}(x)$.
\end{enumerate}
Morphism data are called {\em full} if 

$\forall k \in I$, $\forall x \in U_k$: $\exists i \leq k$, $\exists i' \in I'$: $x \in U_i, (i',i) \in F$.
\end{definition}

\begin{theorem}\label{th:reconstruction2}
Let $(M,\cA),(M',\cA')$ be manifolds with atlas. Then,
given full morphism data, there is a unique map $f:M \to M'$ such that, whenever $x \in M$, $(i',i) \in F$ and $x \in U_i$,
$$
f(x) = \phi_{i'}^{-1} ( f_{i' i} (\phi_i(x))) \, . 
$$
\end{theorem}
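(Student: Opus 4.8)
The plan is to define $f$ by the formula in the statement and then check, in order: (a) that the formula assigns at least one value to every $x \in M$; (b) that the value is independent of the choice of $(i',i) \in F$ with $x \in U_i$; and (c) that uniqueness of $f$ follows because the defining property pins down $f(x)$ at every point. Step (a) is immediate from the hypothesis that the morphism data are \emph{full}: given $x \in M$, pick any $k$ with $x \in U_k$, apply fullness to get $i \leq k$ and $i' \in I'$ with $x \in U_i$ and $(i',i) \in F$, and set $f(x) := \phi_{i'}^{-1}(f_{i'i}(\phi_i(x)))$, which makes sense since $\phi_i(x) \in V_i$, $f_{i'i}: V_i \to V_{i'}$, and $\phi_{i'}: U_{i'} \to V_{i'}$ is a bijection.

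The heart of the argument is the well-definedness in step (b). Suppose $(i',i), (k',k) \in F$ with $x \in U_i \cap U_k$; we must show $\phi_{i'}^{-1}(f_{i'i}(\phi_i(x))) = \phi_{k'}^{-1}(f_{k'k}(\phi_k(x)))$. The natural move is to reduce to a configuration where the square $(*)$ applies. Since $x \in U_i \cap U_k$, by Theorem \ref{th:reconstruct} (condition (3') of Def.\ \ref{def:atlas}, which holds for atlases coming from atlas data) there is a common refinement $m$ with $x \in U_m \subset U_i \cap U_k$ and $m \leq i$, $m \leq k$; and moreover, within the e-pos $(I,E,L)$, one can use (Epos) to produce an index equivalent to $m$ sitting below both $i$ and $k$, so after renaming we may assume $(i,k) \in E$ with $x \in U_i = U_k$. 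Then restriction property (1) of the morphism data gives $f_{i'm} = f_{i'i}|_{V_m}$ and $f_{k'm} = f_{k'k}|_{V_m}$, so it suffices to compare $f_{i'i}$ and $f_{k'k}$ when $(i,k) \in E$. In that case one needs $(i',k') \in R$ (or at least a chain of $E'$-related indices linking them): one descends $i'$ and $k'$ via (Epos) in $(I',E',L')$ to a common configuration, uses co-restriction property (2) to identify the components, applies the commuting square $(*)$, and uses that $\phi_{k'i'}' = \phi_{i'}' \circ (\phi_{i'})^{-1}$ composed appropriately to convert between $\phi_{i'}^{-1}(f_{i'i}(\cdot))$ and $\phi_{k'}^{-1}(f_{k'k}(\cdot))$. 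Concretely, $\phi_{i'}^{-1}(f_{i'i}(\phi_i(x)))$ and $\phi_{k'}^{-1}(f_{k'k}(\phi_i(x)))$ agree iff $f_{k'k}(\phi_i(x)) = \phi_{k'i'}'(f_{i'i}(\phi_i(x)))$, which is exactly square $(*)$ read at the point $\phi_i(x)$ (using $\phi_{ki} = \id$ since $U_i = U_k$, so $\phi_i(x) = \phi_k(x)$).

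Finally, uniqueness in step (c): if $g: M \to M'$ is any map satisfying $g(x) = \phi_{i'}^{-1}(f_{i'i}(\phi_i(x)))$ whenever $(i',i) \in F$ and $x \in U_i$, then for each $x$ fullness provides at least one such pair, forcing $g(x) = f(x)$; hence $g = f$.

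\textbf{Main obstacle.} The delicate point is step (b): managing the descent in \emph{both} e-poses simultaneously so that the commuting square $(*)$ can be invoked. One must be careful that the refinement $m$ below $i$ and $k$ can be chosen inside an $E$-class that $(*)$ and properties (1)--(2) actually see, and that the corresponding indices on the $M'$ side are linked by $R$; this is where the (Epos) axiom and Remark \ref{rk:uniqueness} do the real work, much as in the transitivity argument of Theorem \ref{th:reconstruct}. As the authors suggest there, I would present this via a diagram rather than grinding through every composition, but the logical skeleton above is what needs to be filled in.
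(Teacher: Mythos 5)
Your proposal follows essentially the same route as the paper's own proof: define $f$ pointwise via the formula (existence of a defining pair being guaranteed by fullness), reduce the comparison of two choices to the case where the commuting square $(*)$ applies by descending in both e-poses via (Epos) and the restriction/co-restriction properties, and read off uniqueness from the defining formula. Your sketch is, if anything, more explicit than the paper's about the delicate simultaneous descent in $I$ and $I'$, which the paper also leaves at the level of ``we may find smaller charts with the corresponding property.''
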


\begin{proof}
Uniqueness is clear from the last formula, since (by the fullness condition) for every $x \in M$ there exist $(i',i) \in F$ with
$x \in U_i$. To prove existence, we define $f(x)$ by that formula, with respect to some choice of $(i',i) \in F$ with
$x \in U_i$, and we have to prove that with respect to another such choice, $(j',j)$,
$$
\phi_{i'}^{-1} ( f_{i'i} (\phi_i(x))) =
\phi_{j'}^{-1} ( f_{j'j} (\phi_j(x))) .
$$
If $(i,j) \in E$ and $(i',j') \in E'$, so 
$\bigl( (i,j), (i',j') \bigr) \in R$, then this follows from $\phi_{j',i'} \circ f_{i' i} = f_{j' j} \circ \phi_{ji}$.
If $(i,j)$ or $(i',j')$ are not in $E$, resp.\ $E'$, then using fullness and
 restriction and corestriction properties, we may find smaller charts with the corresponding property, and we get the same result.
 \end{proof}

\begin{example} Assume that $M=M'$.
Then $f_{ij}:=\phi_{ij}$ defines morphism data. The preceding theorem shows that these morphism data belong to the
identity map $\id_M$. Thus one may say that manifold data are ``morphism data of a would-be-identity''. 
\end{example}

\begin{definition}
Morphism data, and morphisms, are said to {\em have some property}, such as {\em smoothness}, if all components  $f_{ij}$ have this property. 
\end{definition}
 
 \begin{theorem}\label{th:comp}
If $g$ and $f$ are (smooth) composable morphisms (having some property, such as smoothness)
of manifolds with atlas, then so is $g \circ f$.
\end{theorem}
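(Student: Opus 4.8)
The plan is to reconstruct composition of morphisms purely at the level of morphism data, and then invoke Theorem \ref{th:reconstruction2} to recover the composite map $g\circ f$, observing that the resulting components inherit whatever property (e.g.\ smoothness) the components of $f$ and $g$ have. So let $(M,\cA),(M',\cA'),(M'',\cA'')$ be manifolds with atlas, with e-poses $(I,E,L)$, $(I',E',L')$, $(I'',E'',L'')$, and let $(F,R,(f_{i'i}))$ be full morphism data for $f:M\to M'$ and $(F',R',(g_{i''i'}))$ full morphism data for $g:M'\to M''$. First I would define the candidate morphism data for $g\circ f$: set $G\circ F \subset I''\times I$ to be the relational composite $\{(i'',i)\mid \exists i'\in I': (i'',i')\in F',\ (i',i)\in F\}$, and similarly $R'\circ R\subset E''\times E$; for $(i'',i)\in G\circ F$ choose a witness $i'$ and put $(g\circ f)_{i''i}:=g_{i''i'}\circ f_{i'i}:V_i\to V_{i''}$.

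The key steps are then: (1)\ show $(g\circ f)_{i''i}$ is well defined, independent of the choice of intermediate index $i'$ --- this uses the restriction/co-restriction properties (1),(2) of Definition \ref{def:morphdata} applied on the $M'$ side: given two witnesses $i'_1,i'_2$ with $U_{i'_1}$, $U_{i'_2}$ both containing $f(U_i)$, one passes, via condition (3)/(iii) or (Epos), to a common smaller index $i'_0\le i'_1$, $i'_0\le i'_2$ (or handles the $E'$ case directly) and compares using that the $f_{i'i}$ and $g_{i''i'}$ restrict compatibly; (2)\ verify the square $(*)$ for $g\circ f$: given $((i,k),(i'',k''))\in R'\circ R$, choose intermediate $(i',k')$ with $((i,k),(i',k'))\in R$ and $((i',k'),(i'',k''))\in R'$, and stack the two commuting squares for $f$ and $g$ to get $\phi''_{k''i''}\circ(g\circ f)_{i''i}=(g\circ f)_{k''k}\circ\phi_{ki}$; (3)\ check the restriction and co-restriction properties for $G\circ F$, which follow termwise from those for $F$ and $F'$; (4)\ check fullness: given $k\in I$ and $x\in U_k$, fullness of $F$ gives $i\le k$, $i'\in I'$ with $x\in U_i$, $(i',i)\in F$, i.e.\ $f(U_i)\subset U_{i'}$ hence $f(x)\in U_{i'}$; then fullness of $F'$ (possibly after first restricting $i'$ using the co-restriction property, since $f(x)$ need only lie in some $U_j$ with $j\le i'$) gives $i''\in I''$ and $i'_0\le i'$ with $f(x)\in U_{i'_0}$, $(i'',i'_0)\in F'$; restricting $i$ accordingly via property (1) on the $F$ side produces the required witness, so $(i'',i)\in G\circ F$ with $i\le k$ and $x\in U_i$.

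Having assembled full morphism data $(G\circ F,\,R'\circ R,\,((g\circ f)_{i''i}))$, Theorem \ref{th:reconstruction2} produces a unique map $h:M\to M''$ with $h(x)=\phi''_{i''}{}^{-1}\big((g\circ f)_{i''i}(\phi_i(x))\big)$ whenever $x\in U_i$, $(i'',i)\in G\circ F$; unwinding the definitions, $h(x)=\phi''_{i''}{}^{-1}\big(g_{i''i'}(\phi'_{i'}(f(x)))\big)=g(f(x))$, so $h=g\circ f$, and the data just constructed are morphism data for it. Finally, since each component $(g\circ f)_{i''i}=g_{i''i'}\circ f_{i'i}$ is a composite of a component of $g$ and a component of $f$, it has the stated property (smoothness, or any property closed under composition of maps between open subsets of model spaces), which by the convention of the Definition preceding the theorem means $g\circ f$ has that property.

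I expect the main obstacle to be the bookkeeping in steps (1) and (4): the relations $F,F'$ are not functional, and to select and compare witnesses one must repeatedly descend to smaller charts using (Epos) together with the restriction/co-restriction axioms and condition (3)/(iii) of the atlas-data definition, exactly as in the diagram-chase proof of Theorem \ref{th:reconstruct}. All of this is routine but notation-heavy, so I would organize it around a single lemma asserting that the relational composite of two special (full) natural relations of e-poses, equipped with composed components, is again a special (full) natural relation --- indeed this is the content of the fact, alluded to in Appendix \ref{app:NR}, that natural relations compose, giving the $2$-category structure --- and then the theorem is an immediate application.
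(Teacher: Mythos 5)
Your proposal is correct and follows essentially the same route as the paper, whose proof simply declares that $(g \circ f)_{\ell i} = g_{\ell k} \circ f_{k i}$ together with the relational composite $G \circ F$ define (smooth) morphism data and leaves all verification to the reader; you supply exactly those omitted verifications. The one place needing slight care is your step (4): shrinking the intermediate target index $i'$ is not furnished by the restriction/co-restriction axioms alone, but it does follow because $F$ and $F'$ are the \emph{full} relations induced by the actual maps (atlas-continuity), which is how you in effect use it, so the argument goes through.
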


\begin{proof}
One has to check that $(g \circ f)_{\ell i} = g_{\ell k} \circ f_{ji}$, together with the relation $G \circ F$ from $I$ to $I''$, define
(smooth) morphism data.
No new ideas are involved here, and we may leave it to the reader.
\end{proof}

One may say that the proof of Theorem \ref{th:comp} consists of putting diagrams of the type (*) from Def.\ \ref{def:morphdata}
next to the other (horizontally), and thus define new diagrams.
But one may also put them one over the other (vertically), and this again defines new diagrams.
Indeed, this can be done for general natural relations (appendix \ref{app:NR}), and then defines a {\em double category}.
However, in the present case the second category structure is not relevant (since atlas data are a ``would-be-identity'' morphism,
as said above, and hence the second composition law somehow only reflects the composition of identity maps). 
Nevertheless, these remarks show that morphisms of manifolds sit inside very natural and bigger double categories. 
There is also a link with {\em $2$-categories}:

\begin{example}\label{ex:full2}
Consider the case $V= M$, $V'=M'$ with their atlases  described in example \ref{ex:full}.
Let $f:V \to V'$ be a map (continuous if data are topological). 
Then
$\phi_{gh}(x)=gh^{-1}(x)$ and 
$$
f_{k\ell}(x)= k f \ell^{-1}(x).
$$
Following the standard terminology from matrix theory, let us say that all components $f_{k\ell}$ are {\em equivalent} to each other.
In our example, $f$ can be recovered directly from its equivalent pictures, via
$f = f_{\id_{V'},\id_V}$. For a general manifold, there is no such formula, and one has to use the definition given in
Theorem \ref{th:reconstruction2}.
\end{example}

\begin{example}
Assume now that, with notation as in the preceding example, $V=V'$.
Then we may recover $f$ already from the restricted data $f_{kk} =  k f k^{-1}$.
%, or (slightly more general) $f_{k'k}$ with $k \leq k'$.
Following usual terminology from the linear case, we say that these components are {\em similar} to each other.
\end{example}

\begin{definition}
Wih notation as in Definition \ref{def:morphdata} and Theorem \ref{th:reconstruction2},
assume that $M=M'$, $V=V'$, $\cA = \cA'$.
Then the collection $(f_{k'k})$ with $(k',k) \in F$ such that $k \leq k'$ are called {\em restricted morphism data}, or
{\em similarity data}.
\end{definition}

Morally, if $f$ is ``sufficiently close to the identity map of $M$'', then $f$ can be recovered from its restricted morphism data, as in Theorem
\ref{th:reconstruction2}, just as in linear algebra, where we describe {\em endo}morphisms by using the {\em same} base in the domain and in the
range space.

%A. \href{https://en.wikipedia.org/wiki/Matrix_equivalence}{equivalence of matrices and endos}:  $B = Q^{-1} A P$ corresponds to double cats,

% B. \href{https://en.wikipedia.org/wiki/Matrix_similarity}{similarity of matrices and endos}: $B = P^{-1} A P$ corresponds to $2$-cats: 

\section{Comments}\label{sec:further}

Some short remarks and comments on related topics and open problems:

\begin{enumerate}
\item
(Finite atlases.)
Like projective spaces (example \ref{ex:projectiveplane2}), many algebraic varieties come together with, more or less
``canonical'', finite atlases. In particular, this is true 
for {\em Grassmann and Lagrangian varieties}, and more generally, {\em symmetric $R$-spaces} (the ``Jordan geometries'' from
\cite{BeNe05, Be14}) where such atlases have a direct relation with Jordan theory. 
It should be interesting to describe and study the interaction between the abstract theory and the combinatorial and algebraic structure of
such atlases. 
\item
(Conceptual calculus.)
As said in the introduction, this is the starting point for the present work (\cite{Be15a}). 
The {\em first order difference groupoid} $M^\sett{1}$ of a manifold $M$ (cf.\ loc.\ cit.) is a natural example for the following item:
\item
($3$-categories.)
 As said above, $2$-categories are naturally related to the present approach. What about $3$-categories? 
 When the space $M$ carries itself the structure of some kind of groupoid or pregroupoid (e.g., principal bundles, Lie groupoids), then
such structure together with those described in the present work should be compatible, and thus give rise to (strict) higher order categories. 
\item
(Categorical aspects.)
For a discussion of purely categorial aspects of the notion of manifolds and their morphisms, see the $n$-lab, in particular
\url{https://ncatlab.org/nlab/show/manifold#morphisms_of_manifolds}.
Our definition of manifolds via e-poses  is also related to Lawson's construction of ordered groupoids via
{\em combinatorial groupoids}, \cite{La05}.
\end{enumerate}

\appendix

\section{Ordered groupoids}\label{app:OG}

\begin{notation} I use notation as in \cite{Be15a}, Appendix B:
a {\em groupoid} is an algebraic structure 
$
G = (G_0,G_1,\pi_1,\pi_0,\delta,\ast)
$,
 where $G_0$ is the set of {\em objects}, $G_1$ is the set of {\em morphisms},
$\pi_1:G_1\to G_0$ the {\em target}, and 
$\pi_0:G_1\to G_0$ the {\em source projection}, $\delta:G_0 \to G_1$ the {\em unit section}, and
$\ast : G_1 \times_{G_0} G_1 \to G_1$ the  composition  map.
Our convention is that $g \ast h$ is defined iff $\pi_1(h)=\pi_0(g)$.
The {\em inverse} of $g$ is denoted by $g^{-1}$.
A {\em small category (small cat)} is defined like a groupoid, without assuming existence of inverses.

\ssk
The definition of {\em ordered groupoid} goes back to Charles Ehresmann, see references in \cite{La05}.
The following form of the axioms is taken from \cite{AGM14}:
\end{notation}

\begin{definition} 
An  {\em ordered groupoid} is a groupoid $(G_0,G_1,\pi_0,\pi_1,\delta,\ast)$ together with
partial order relations $L$ (or $\leq $) on the sets $G_0$ and on $G_1$, such that:
\begin{enumerate}
\item[(OG0)] $\forall x,y \in G_0$: $x \leq y$ iff $\delta(x) \leq \delta(y)$,
\item[(OG1)]  $g \leq h$ iff $g^{-1} \leq h^{-1}$,
\item[(OG2)] if $g\leq h$, $g' \leq h'$ and if $g \ast g'$ and $h\ast h'$ are defined, then
$g \ast g' \leq h \ast h'$,
\item[(OG3)]
for all morphisms $g:x \to y$ and all objects $x'$ with $x' \leq x$, there exists  a
 unique morphism $g':x' \to y'$ with $g' \leq g, y'  \leq y$:
$$
\xymatrix{
    x \ar@{-}[d]^L  \ar[r]^g & y \ar@{.}[d]^L    \\
    x' \ar@{.>}[r]^{g'}  & y' 
  } 
  $$
 \end{enumerate}
\end{definition}

\nin  One may think of $g'$ as a kind of {\em restriction of $g$ to $x'$}.  

\begin{remark}
Every partially ordered set gives rise to a small category (subcat of the pair groupoid), and hence the set 
$[x] = \{ x' \mid x' \leq x \}$ is the object set of a small cat. Every $g \in G_1$ then defines a functor from
$[\pi_0(g)]$ to $[\pi_1(g)]$.
\end{remark} 

%Rk:  There are some other references on the web (not many, and the $n$-lab seems not to mention ordered groupoids. 
%E.g., \url{http://arxiv.org/pdf/1403.3254v2.pdf} and (same text but longer):
% \url{http://www.macs.hw.ac.uk/~ndg/mmexp.pdf}
% Unfortunately, the term ``po-groupoid'' is already taken, although it seems to mean sth quite different:
% \url{http://arxiv.org/pdf/0810.1305.pdf}

% Rk To Define an ordered cat: should be similar, but in (Ogoid) have to require that $g'$ is an epimorphism ! 

\begin{example}(e-poses)
Recall that an equivalence relation $E$ on a set $I$ is the morphism set of a groupoid with object set $I$.
Thus an e-pos $(I,E,L)$ (Definition \ref{def:epos}) is an ordered groupoid. Here the partial order on $E$ is completely determined by the
partial order on $I$: necessarily, $(i',j') \leq (i,j)$ iff [$i'\leq i$ and $j' \leq j$].
\end{example}

\begin{example}(local bijections)
Recall that {\em endorelations on a set $V$} form a small category $(C_0,C_1)=(\cP(V),\cP(V \times V))$.
Morphisms are relations $R \subset (V \times V)$ with  composition being relational composition, 
source and target
\begin{align*}
\pi_0(R) = \dom (R) & = \{ x \in V \mid \, \exists y \in V : (y,x) \in R \} \\
\pi_1(R) = \im(R) & = \{ y \in V \mid \exists x \in V : (y,x) \in R \} .
\end{align*}
Objects and morphisms carry a natural partial order $L$  given by inclusion of sets.
The 
{\em full pseudogroup of $V$}, or {\em groupoid of local bisections of the pair groupoid of $V$},
is the subcat $(\cP(V), \cB_{loc}(V))$
 of $(C_0,C_1)$ whose morphisms are precisely the {\em local bijections}, that is, 
 the {\em injective and locally functional} relations $R$:
$$
(y,x) ,(y',x),(y,x') \in R \qquad \Rightarrow \qquad y=y', x=x' .
$$
These are  the graphs $\Gamma_f$ of bijections $f :V' \to V''$ with
$V',V'' \subset V$, and $f \leq g$ means that $f$ is a restriction of $g$.
Properties (OG0) -- (OG3) are easily checked.
\end{example}

\begin{example} (pseudogroups of smooth maps)
Assume now that $V$ carries additional structure, e.g., $V$ is a topological vector space over a topological field $\K$.
Then we may define the subgroupoid of $\cB_{loc}(V)$ of all locally defined diffeomorphisms of class $C^k$:
its objects are open subsets of $V$, and its morphisms graphs of $C^k$-diffeomorphisms 
$f:V' \to V''$ with $V',V''$ open in $V$.
It is again an ordered groupoid, since restriction (together with co-restriction to the image) of $f$ is again a
$C^k$-diffeomorphism.
This defines an ordered groupoid $\cB_{loc}^k(V)$, called the {\em pseudogroup of local $C^k$-diffeomorphisms}.
Similarly, any kind of structure on $V$ with structure preserving maps that can be restricted (and co-restricted) to suitable
subsets, defines a certain pseudogroup, which is an odered subgroupoid of $\cB_{loc}(V)$.
If the property is defined by ``local'' properties (such as smoothness), the following pseudogroup property is ensured:
\end{example}

\begin{definition}\label{def:pseudogroup}
A subgroupoid $G=(G_0,G_1)$ of the full pseudogroup $(\cP(V),\cB_{loc}(V))$ is called a {\em pseudogroup of transformations of $V$}
if:
\begin{enumerate}
\item[{\rm (PsG)}]
assume that
 $U= \cup_{\alpha \in J} U_\alpha$ with all $U_\alpha \in G_0$ and that $f:U \to U' \subset V$ is a bijection such that
$\forall \alpha \in J$: $f\vert_{U_\alpha} \in G_1$.  Then  $f \in G_1$.
\end{enumerate}
\end{definition}

\section{Natural relations between groupoids}\label{app:NR}

Assume $G=(G_0,G_1)$ and $G'=(G_0',G_1')$ are groupoids. 
A {\em morphism} between $G$ and $G'$, or {\em functor}, is a pair of maps $h_0:G_0 \to G_0'$,
$h_1:G_1 \to G_1'$ preserving all structures.
However,  there are other ways to turn groupoids into a category, such as the following: 

%in cases like the one of the pseudogroup $\cB_{loc}(V)$, one is often interested in other kinds of morphisms
%induced by maps $f$ of the underlying space $V$:
%if $f$ is not a bijection, then it does not induce {\em maps} on the level of the pseudogroup, but it induces 
%{\em relations}, which are {\em natural}, in the following sense:

\begin{definition}\label{def:naturalrelation}
A {\em natural relation between $G$ and $G'$}  is given by a pair $(F,K)$ of binary relations on objects, and a binary relation $R$ on
morphisms 
$$
(F,K) \in \cP(G_0'\times G_0)^2, \qquad 
R \in   \cP(G_1' \times G_1)
$$
such that:
\begin{enumerate}
\item[{\rm (NR)}]
if $(g,h) \in R$, then letting  $x:=\pi_0(g)$, $x':=\pi_0(g')$, $y:=\pi_1(g)$, $y':=\pi_1(h)$,
we have $(x,x') \in F$ and $(y,y') \in K$: 
$$
\xymatrix{
 x  \ar@{-}[rd]^F \ar[rr]^g  & & y \ar@{-}[rd]^K &  \\
 &  x' \ar[rr]^h & & y'
 }$$
\end{enumerate}
If, moreover, $F=K$, we speak of a {\em special natural relation}.
\end{definition}

\begin{remark} Condition
(NR) implies that, under relational composition,  
$$
\pi_0 \circ R = F \circ \pi_0, \qquad \pi_1 \circ R = K \circ \pi_1.
$$  
% but the converse is very wrong! 
% Do we have to add the condition: $F$ is stable under $\ast$ ? (no) 
\end{remark}

\begin{example}
In an ordered groupoid, $L$ is a special natural endorelation of $G$. If we wanted to draw diagrams of 
natural relations between
ordered groupoids, we would need a third dimension for representing them!
\end{example}

\begin{comment}%%
Assume $G=\cB_{loc}(V)$ and $G'=\cB_{loc}(V)$ are the full pseudogroups of sets $V$, resp.\ $V'$.
Let $U,W \subset V $ and $U',W' \subset V'$. 
Assume $(U,U') , (W,W') \in F$, where $F$ is some relation between $\cP(V)$ and $\cP(V')$.
We say that two locally defined maps $f:U \to U'$, $g:W \to W'$ are {\em equivalent} if there exist
$\phi \in G_1$ and $\psi \in G_1'$ such that
$g \circ \phi = \psi \circ h$, and let
$R = \{ (f,g) \mid f \,  {\rm  equivalent } \, g \}$.
Then $(F,R)$ is a special natural relation.
Theorem \ref{th:reconstruction2} gives an answer to the question: when do systems of equivalent maps are chart images of a single
well-defined map  ?
% answer: the transition functions $f$ are themselves a system representing $\id_V$, and the $g$ represent $\id_{V'}$: cocycle relations.
\end{comment}%%%%%%

It should be clear that composition of natural relations gives again a natural relation: if
$(F,K;R),(F',K';R')$ are natural between $G$ and $G'$, resp.\ between $G'$ and $G''$, then 
$(F' \circ F,K' \circ K;R' \circ R)$ is natural between $G$ and $G''$. 
For the purposes of the present paper, this remark suffices. 
However, it is certainly useful to note that
just as for natural transformations in general category theory, there are in fact {\em two} compositions, and that we get a
{\em double category} (see, e.g., \cite{Be15a, Be15b} for definitions):

\begin{theorem}
The set $\cN(G)$ of natural relations on a groupoid $G$ forms (the set of $2$-morphisms of) a small double category
$$
\xymatrix{ \cN(G) \ar[r]  \ar@<1ex>[r]  \ar[d]  \ar@<1ex>[d] & \cP(G_1) \ar[d]  \ar@<1ex>[d]  \\
\cP(G_0 \times G_0) \ar[r]  \ar@<1ex>[r]  & \cP(G_0)}
$$
where the two compositions on $2$-morphisms are given by:
\begin{enumerate}
\item
relational composition:
if $(F,K;R)$ and $(F',K';R')$ are composable pairs of natural relations, then
$(F' \circ F, K' \circ K;R' \circ R)$ is again a natural relation,
\item
pointwise $\ast$-composition:
if $(F,K;R)$ and $(F',K';R')$ are natural relations on the same groupoid $G$ and $K=F'$, let $R' \ast R := $
$$ \qquad \quad
\Bigsetof{ (g'',h'') \in G_1 \times G_1} {
\begin{array}{c}
\exists (g',h') \in R', (g,h) \in R , \\ (g',g) , (h',h) \in G_1 \times_{G_0} G_1, \, 
g'' = g' \ast g, h'' = h' \ast h \end{array}  }\ .
$$
Then $(F,K';R' \ast R)$ is again a natural relation.
\end{enumerate}
\end{theorem}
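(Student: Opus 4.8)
The plan is to verify the two closure properties (NR-closure under each of the two composition laws) directly from the definitions, treating the double-category bookkeeping as routine once these are established. The key observation is that a natural relation is essentially a relation $R$ on morphisms together with the ``boundary'' relations $F,K$ on objects, subject to (NR), and that all structure we need is captured by the identities $\pi_0 \circ R = F \circ \pi_0$ and $\pi_1 \circ R = K \circ \pi_1$ from the remark following Definition \ref{def:naturalrelation}. So I would first restate these identities and note that, conversely, given $R$, the relations $F := \pi_0 \circ R \circ \pi_0^{-1}$ and $K := \pi_1 \circ R \circ \pi_1^{-1}$ are the smallest choices making $(F,K;R)$ natural; but since the theorem only asserts that the composites \emph{are} natural relations for the \emph{specified} $F',K'$-data, it suffices to check (NR) for those specified boundary relations.

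For part (1), relational composition: given $(g'',h'') \in R' \circ R$, pick $(g,h) \in R$ and $(g',h') \in R'$ with the middle morphism matching, i.e. $g'' = $ the $R'$-partner composed appropriately — here one must be slightly careful, since $R' \circ R$ as relational composition means $\{(g'',h) \mid \exists g : (g'',g) \in R', (g,h) \in R\}$, so $g'' \in G_1''$ and $h \in G_1$. Then $\pi_0(g'') = \pi_0(g')$-image under the $G'$-to-$G''$ data; more precisely, from $(g'',g) \in R'$ we get $(\pi_0 g'', \pi_0 g) \in F'$ and from $(g,h)\in R$ we get $(\pi_0 g, \pi_0 h) \in F$, so $(\pi_0 g'', \pi_0 h) \in F' \circ F$; similarly for $\pi_1$ and $K' \circ K$. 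That is exactly (NR) for $(F'\circ F, K'\circ K; R'\circ R)$. This part is genuinely immediate.

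For part (2), pointwise $\ast$-composition, the content is that the ``vertical'' composite $R' \ast R$ respects the groupoid composition on both $G$ and $G'$ simultaneously. Given $(g'',h'') \in R' \ast R$ with $g'' = g' \ast g$, $h'' = h' \ast h$, $(g',h')\in R'$, $(g,h)\in R$, and $(g',g),(h',h)$ composable in $G'$ resp.\ $G$: I would compute $\pi_0(g'') = \pi_0(g)$ (source of a composite is the source of the right factor), $\pi_1(g'') = \pi_1(g')$, and likewise $\pi_0(h'') = \pi_0(h)$, $\pi_1(h'') = \pi_1(h')$. Then $(\pi_0 g'', \pi_0 h'') = (\pi_0 g, \pi_0 h) \in F$ from $(g,h)\in R$, and $(\pi_1 g'', \pi_1 h'') = (\pi_1 g', \pi_1 h') \in K'$ from $(g',h')\in R'$; here the hypothesis $K = F'$ is what makes the two relations $R$ and $R'$ ``stackable'' in the first place (the target objects of the lower square must be the source objects of the upper square), but it is not actually needed to verify (NR) for the composite, only to make the definition of $R' \ast R$ meaningful. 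So (NR) holds for $(F, K'; R'\ast R)$.

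The main obstacle — more a matter of care than of difficulty — is bookkeeping the direction conventions: relations are written ``output-on-the-left'' ($R \subset G_1' \times G_1$), the groupoid composition $g \ast g'$ is defined when $\pi_1(g') = \pi_0(g)$, and partial orders enter only in the ordered-groupoid examples, not here. I would also remark that the interchange law (that the two composites of a $2 \times 2$ array of natural relations agree) and the category axioms for each of the two compositions follow formally, since relational composition is associative with identities the diagonal relations, and $\ast$-composition inherits associativity and units from the groupoid $G$, $G'$ pointwise; I would state this and leave the routine diagram chase to the reader, as is done for Theorem \ref{th:comp}. No step requires a genuinely new idea.
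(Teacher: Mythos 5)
Your proof is correct, but it takes a different route from the one in the paper. You verify the two closure properties directly: for horizontal (relational) composition you chase an element of $R'\circ R$ through the two instances of (NR) and land in $F'\circ F$ and $K'\circ K$; for vertical ($\ast$-) composition you use that $\pi_0(g'\ast g)=\pi_0(g)$ and $\pi_1(g'\ast g)=\pi_1(g')$ to read off (NR) for $(F,K';R'\ast R)$ from the lower and upper squares respectively. That is exactly the mathematical content, and your observation that $K=F'$ is only needed to make the vertical composite a sensible categorical operation (not to verify (NR)) is accurate. The paper instead argues structurally: it applies the pair-groupoid functor $\pG$ to $G$ to obtain a small double category, applies the power-set functor $\cP$ to get another one, and then observes that $\cN(G)$ is a sub-double-category of $\cP(\pG(G))$ --- composition of $2$-morphisms being the juxtaposition of the parallelograms of Definition \ref{def:naturalrelation} side by side, respectively one over the other. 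The paper's route buys associativity, units and the interchange law for free from the ambient double category, at the cost of leaving the closure check (your computation) implicit in the word ``observe''; your route makes the closure check explicit and self-contained, but defers associativity, units and interchange to a formal remark, as you acknowledge. The two proofs are complementary, and your element-wise verification is precisely what one would need to justify the paper's claim that $\cN(G)$ is closed under both compositions in $\cP(\pG(G))$.
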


\begin{proof}
The shortest proof is probably by remarking first that the {\em pair groupoid functor} $\pG$  applied to a  small category $C=(C_0,C_1)$,  yields a small double cat $\pG(C)$: 
$$
\xymatrix{  C_1 \times C_1  \ar[r]  \ar@<1ex>[r]  \ar[d]  \ar@<1ex>[d] & C_1 \ar[d]  \ar@<1ex>[d]  \\
C_0 \times C_0  \ar[r]  \ar@<1ex>[r]  & C_0.}
$$
(Indeed, $\pG$ is a {\em cat rule} in the sense of Appendix B in \cite{Be15b}.)
%We can view $2$-morphisms as pairs of $(u,v)$ of morphisms of $C$, which are composed pairwise, on the one hand, and
%as in the pair groupoid of $C_1$, on the other hand. 
Applying the power set functor $\cP$ to this, we get again a small double cat $\cP(\pG (C))$. Now observe that the structure defined in the theorem,
when $G=C$, is  nothing but a sub-double cat of this small cat.
Indeed, composition of $2$-morphisms corresponds to putting parallelograms as depicted in Def.\ \ref{def:naturalrelation}, side by side, resp.\
one over the other. 
\end{proof}

It is obvious that the special natural endorelations of $G$ form a sub-doublecat, where the upper horizontal double arrows are replaced
by a single arrow. 
 On the other hand, to every natural endorelation $(F,K;R)$ we may associate a relation $\check R$ between objects and morphisms of $C$
 by restricing to units:
$$
\check R := \{ (x,g) \in G_0 \times G_1 \mid \, (\delta(x),g) \in R \},
$$
and define $\check \cN(G) \subset \cP(G_0 \times G_1) = \{ \check R \mid R \in \cN(G) \}$.
The relation $\check R$ is the precise analog of what one might call a
{\em natural transformation from the functor $F:C_0 \to C_0'$ to the functor $K:C_0 \to C_0'$}.
Just as natural transformations form a (strict) $2$-category, so does $\check \cN(G)$:
$$
\xymatrix{  \check \cN(G)   \ar[r]  \ar@<1ex>[r]  \ar[rd]  \ar@<1ex>[rd] &  \cP(G_1)  \ar[d]  \ar@<1ex>[d]  \\
&  \cP(G_0).}
$$

\begin{remark}\label{rk:NR}
It seems that the concept of natural relation  has so far not  yet been considered in category theory (the reason might be
that people generally work in the context of ``large'' categories, where one is not used to consider binary relations that are not
necesarily functional).  
See, however,  a short remark in the $n$-lab,  \url{https://ncatlab.org/nlab/show/natural+transformation}
(``An alternative but ultimately equivalent way...''), pointing into the direction pursued here.
\end{remark}

\end{document}